\documentclass[11pt]{article}
\usepackage{graphicx}
\usepackage{epstopdf}
\usepackage{subcaption}
\usepackage{enumerate}
\usepackage{amssymb,a4wide,latexsym,makeidx,epsfig,fleqn}
\usepackage{amsthm}
\usepackage{amsmath}
\allowdisplaybreaks[4]
\usepackage{enumerate}
\usepackage{graphicx}
\usepackage{color}
\usepackage{epstopdf}
\newtheorem{theorem}{Theorem}[section]

\newtheorem{lemma}[theorem]{Lemma}

\newtheorem{problem}[theorem]{Problem}

\begin{document}
	\textwidth 150mm \textheight 230mm
	\setlength{\topmargin}{-15mm}
	\title{Sufficient conditions for fractional $k$-factor-critical graphs with minimum degree to be $k$-factor-critical
		\footnote{This work is supported by the National Natural Science Foundations of China (Nos. 12371348, 12201258), the Postgraduate Research \& Practice Innovation Program of Jiangsu Normal University (No. 2025XKT0633), High-Quality Science and Technology Cultivation Project of Jiangsu Normal University (No. JSNUGZL2026069).}}
	\author{{ Jiaxu Zhong, Yong Lu\footnote{Corresponding author}}\\
		{\small  School of Mathematics and Statistics, Jiangsu Normal University,}\\ {\small  Xuzhou, Jiangsu 221116,
			People's Republic
			of China.}\\
		{\small E-mails: JXZhong@163.com, luyong@jsnu.edu.cn}}
	
	\date{}
	\maketitle
	\begin{center}
		\begin{minipage}{120mm}
			\vskip 0.3cm
			\begin{center}
				{\small {\bf Abstract}}
			\end{center}
			{\small
				A graph $G$ is called $k$-factor-critical if after deleting any $k$ vertices the remaining subgraph still has a perfect matching. Fan and Lin [Adv. in Appl. Math. 174 (2026) 103019] posed an adjacency spectral condition for a graph with minimum degree to be $k$-factor-critical. A graph $G$ is fractional $k$-factor-critical if after deleting any $k$ vertices the remaining subgraph still has a fractional perfect matching. Clearly, the fractional $k$-factor-criticality of a graph is a necessary property for a graph to be $k$-factor-critical. Jia, Fan and Liu [Discrete Appl. Math. 386 (2026) 255-263] proposed a tight sufficient condition in terms of the spectral radius for a graph with fractional $k$-factor-criticality to be $k$-factor-critical. A natural question arises: can we derive analogous sufficient conditions by incorporating the minimum degree parameter of graphs?

				We first establish a lower bound on the size to ensure that a $(k+1)$-connected graph with fractional $k$-factor-criticality is $k$-factor-critical, where \(k\) is a positive integer with \(k \ge 1\). Moreover, we provide a sufficient condition in terms of the spectral radius for a $(k+1)$-connected graph with fractional $k$-factor-criticality to be $k$-factor-critical. Our results generalize the result of
				Jia, Fan and Liu to $(k+1)$-connected graphs. Furthermore, our spectral conditions apply to a broader family of connected graphs compared with the results of Fan and Lin, as well as Jia et al.
				
				\vskip 0.1in \noindent {\bf Keywords}:\ Spectral radius; Size; $k$-factor-critical; minimum degree; fractional $k$-factor-criticality. \vskip
				0.1in \noindent {\bf AMS Subject Classification (2020)}: \ 05C35; 05C50. }
		\end{minipage}
	\end{center}

	\section{Introduction }
	\hspace{1.3em}
	Let $G=(V(G),E(G))$ be a finite, undirected and simple graph, where $V(G)$ is the vertex set and $E(G)$ is the edge set. We denote the \emph{order} and \emph{size} of $G$ by $|V(G)|$ and $|E(G)|$, respectively. Let $\delta(G)$ be the \emph{minimum degree} (or simply $\delta$) of $G$.
	For a vertex subset $S$ of $G$, we denote by $G-S$ and $G[S]$ the subgraph of $G$ obtained from $G$ by deleting the vertices in $S$ together with their incident edges and the subgraph of $G$ induced by $S$, respectively. The number of components and the number of odd components  of $G$ are denoted by $c(G)$ and $o(G)$, respectively. The set of isolated vertices of $G$ is denoted by $i(G)$. Let $K_{n}$ denote the \emph{complete graph} of order $n$.
	For two vertex-disjoint graphs $G_{1}$ and $G_{2}$, we use $G_{1}\cup G_{2}$ to denote the \emph{disjoint union} of $G_{1}$ and $G_{2}$.
	The \emph{join} $G_{1}\vee G_{2}$ is the graph obtained from $G_{1} \cup G_{2}$ by adding all possible edges between $V(G_{1})$ and $V(G_{2})$. A graph $G$ of order $n$ is called \emph{$k$-connected} if $n>k$ and $G-X$ is connected for every set $X\subseteq V(G)$ with $|X|<k$, where $k$ is a positive integer.
	
	For a simple graph $G$ of order $n$, its \emph{adjacency matrix} $A(G)=(a_{ij})_{n\times n}$ is a symmetric matrix with $a_{ij}=1$ if and only if vertices $v_{i}$ and $v_{j}$ are adjacent, and $a_{ij}=0$ otherwise. The \emph{spectral radius} $\rho(G)$ of $G$ is the largest eigenvalue of $A(G)$.

	A \emph{matching} in a graph $G$ is a subset $M$ of $E(G)$ such that no vertex of $G$ is incident with more than one edge in $M$. A \emph{perfect matching} in $G$ is a matching such that every vertex of $G$ is incident with precisely one edge in it. The problem of determining when a general graph has a perfect matching is a classic topic in graph theory. In 2005, Brouwer and Haemers \cite{BH} provided several spectral sufficient conditions for the existence of a perfect matching in a graph. Motivated by their works, Fan et al. \cite{FSHL} gave a spectral radius condition for the existence of a perfect matching in a balanced bipartite graph with minimum degree $\delta$. Since then, many researchers have been interested in finding sufficient conditions to guarantee the existence of a perfect matching in a graph by using various graph invariants, see \cite{BY, EJK, FLL, LFS}.
	
	To enhance the understanding of graph structures with perfect matchings, Gallai \cite{G} and Lov\'{a}sz \cite{L} introduced the concepts of factor-critical and bicritical graphs, respectively. A graph $G$ is defined as \emph{factor-critical} (resp. \emph{bicritical}) if the removal of any vertex $v$ (resp. two vertices $u, v$) results in a graph $G-v$ (resp. $G-u-v$) that has a perfect matching. Motivated by the similarities and interesting properties of these two concepts, Favaron \cite{C} and Yu \cite{MK} independently introduced the concept of $k$-factor-critical graphs: a graph $G$ of order $n$ is \emph{$k$-factor-critical} (where $0 \leq k < n$) if removing any $k$ vertices leaves a graph with a perfect matching. The case $k=0$ is trivial and will not be discussed further in this article. Clearly, if $G$ is a $k$-factor-critical graph with $n$ vertices, then $k$ and $n$ have the same parity. Zhang and Fan \cite{ZF} provided sufficient conditions for a graph to be $k$-factor-critical in terms of the edge number and the spectral radius. Fan and Lin \cite{BY} further derived an adjacency spectral condition for a connected graph with minimum degree to be $k$-factor-critical. Motivated by these results, Zheng et al. \cite{ZLLW} established three sufficient conditions based on the size, the signless Laplacian spectral radius and the distance spectral radius of a connected graph with minimum degree to be $k$-factor-critical. Recently, Wang et al. \cite{MW} gave two sufficient conditions in terms of  the size and the spectral radius for $k$-factor-criticality of $t$-connected graphs. More results on the relationships between the spectral radius and spanning subgraphs can be found in \cite{FL, FDD, CFL, XZL, ZC, ZSZ}.

	A \emph{fractional perfect matching} of a graph $G$ is a function $h$ giving each edge a number in $[0,1]$ such that $\sum_{e\in E_G(v)} h(e) = 1$ for each $v\in V(G)$, where $E_G(v)$ is the set of edges incident to $v$. A graph $G$ is \emph{fractional $k$-factor-critical} if after deleting any $k$ vertices the remaining subgraph still has a fractional perfect matching.
	Obviously, if $G$ is $k$-factor-critical, then it is necessarily fractional $k$-factor-critical, whereas the converse is not universally valid.
	Naturally, the following problem was raised.
	
	\begin{problem}
		What is a tight spectral radius condition which guarantees a graph with fractional $k$-factor-criticality to be $k$-factor-critical?
	\end{problem}
	Concerning Problem 1.1, Jia, Fan and Liu \cite{JFL} proved the following result.

	\noindent\begin{theorem}[\cite{JFL}]\label{th:1.2.}
		Let \( G \) be a connected graph with fractional \( k \)-factor-criticality of order \( n \ge \max\{k+56,~ 23k+20\} \), where \( n \equiv k \pmod{2} \) and \( k \ge 1 \) is an integer. If
		\[
		\rho(G) \ge \rho(K_k \vee (K_{n-k-3} \cup K_3)),
		\]
		then \( G \) is \( k \)-factor-critical unless \( G \cong K_k \vee (K_{n-k-3} \cup K_3) \).
	\end{theorem}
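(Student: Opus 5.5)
We argue by contradiction. Suppose $G$ is fractional $k$-factor-critical, satisfies $\rho(G)\ge \rho(K_k\vee(K_{n-k-3}\cup K_3))$, and is not $k$-factor-critical.

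\textbf{Step 1: a Tutte witness.} Since $G$ is not $k$-factor-critical, there is a $k$-set $X$ such that $G-X$ has no perfect matching. By Tutte's theorem and parity ($n\equiv k \pmod 2$), there is $S\subseteq V(G)\setminus X$ with $o(G-X-S)\ge |S|+2$. Put $T=X\cup S$ and $s=|S|$, so $o(G-T)\ge s+2$.

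\textbf{Step 2: use fractional criticality.} Fractional $k$-factor-criticality is characterized by $i(G-Y)\le |Y|-k$ for every $Y$ with $|Y|\ge k$; this is the fractional Tutte condition applied after deleting any $k$ vertices. Applied to $Y=T$ it gives $i(G-T)\le s$. Hence at least two odd components of $G-T$ have at least $3$ vertices. This is exactly why the extremal graph has a $K_3$ rather than an isolated vertex.

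\textbf{Step 3: reduce to a join.} Adding edges only increases $\rho$, so $G$ is a spanning subgraph of
\[
K_{s+k}\vee(K_{n_1}\cup\cdots\cup K_{n_{s+2}}),
\]
where the $n_i$ are odd, at most $s$ of them equal $1$, and $\sum n_i=n-s-k$. We may also assume the isolated parts number exactly $s$ and at least two parts have size $\ge 3$. We must check that the fractional condition stays consistent under this edge addition; it does, since only the sizes matter in the comparison. We then compare spectral radii in two stages.

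First, for fixed $s$, moving vertices among the nontrivial parts increases $\rho$. Using the standard fact that $\rho(K_a\vee(K_{p}\cup K_{q}\cup\cdots))$ increases when mass moves from the smaller clique to the larger, we may assume the parts are $K_3$, $K_{n-s-k-3-s}$ and $s$ copies of $K_1$. Denote this graph by $H_s=K_{s+k}\vee(K_{n-2s-k-3}\cup K_3\cup sK_1)$.

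Second, and this is the main obstacle, we must show $\rho(H_s)<\rho(H_0)=\rho(K_k\vee(K_{n-k-3}\cup K_3))$ for all $1\le s\le (n-k-3)/2$. For this I would write the equitable quotient matrix of $H_s$ on its four cells, a $4\times4$ matrix whose characteristic polynomial is $f_s(x)$. Then I would evaluate $f_s(\rho(H_0))$ and show it is positive, with all larger roots excluded, using the crude bound $\rho(H_0)>n-k-4$ from the clique $K_{n-3}$.

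For large $s$ (say $s\ge n/3$) it is easier to use $\rho(H_s)\le \sqrt{2e(H_s)-n+1}$ (Hong's bound). We would then verify $e(H_s)$ is small enough compared with $\binom{n-3}{2}$. This also explains the hypothesis $n\ge\max\{k+56,23k+20\}$: it is needed for the middle range of $s$ and for $s=1$.

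\textbf{Step 4: equality.} When $s=0$, $G$ is a spanning subgraph of $H_0$. Since $H_0$ is connected, the Perron–Frobenius theorem gives $\rho(G)<\rho(H_0)$ unless $G\cong H_0$. Hence $G\cong K_k\vee(K_{n-k-3}\cup K_3)$, which indeed is not $k$-factor-critical: deleting the $k$ vertices of $K_k$ leaves two odd components.
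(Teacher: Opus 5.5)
The paper gives no proof of this statement; it is quoted from Jia, Fan and Liu. Your skeleton is the same one the paper uses for Theorems 1.3 and 1.4:
\begin{itemize}
\item a Tutte set $T$ with $o(G-T)\ge |T|-k+2$;
\item Lemma 2.2, forcing two odd components of order at least $3$;
\item Lemma 2.3, reducing $G$ to $H_s=K_{s+k}\vee(K_{b}\cup K_3\cup sK_1)$ with $b:=n-2s-k-3\ge 3$.
\end{itemize}
Steps 1, 2 and the first stage of Step 3 are fine. Two small points. In Step 4, $G$ is a spanning subgraph of $K_k\vee(K_{n_1}\cup K_{n_2})$, not of $H_0$; it is Lemma 2.3 with $t=2$ that gives $\rho(G)\le\rho(H_0)$, with equality only when $G\cong H_0$. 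Also, your remark that the fractional condition must ``stay consistent'' is irrelevant, since $H_s$ only serves as a spectral upper bound.

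\textbf{The gap.} The second stage of Step 3 carries all the content, and you only announce it. Worse, the one quantitative ingredient you name cannot work. You propose to use $\rho(H_0)>n-k-4$. But $H_s\supseteq K_{n-s-3}$ gives $\rho(H_s)>n-s-4\ge n-k-4$ for every $1\le s\le k$. So the quotient polynomial $f_s$ has its largest root beyond your bound, and any sign argument for $f_s$ on $(n-k-4,\infty)$ already fails at $s=1$.

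\textbf{The fix.} The correct bound is the one your own parenthetical yields: $K_{n-3}=K_k\vee K_{n-k-3}\subsetneq H_0$, hence $\rho(H_0)>n-4$. With this, neither the $4\times 4$ determinant nor Hong's bound is needed. Put $a=s+k$. The eigen-equations of $H_s$ show that $\rho(H_s)$ is the unique zero on $(b-1,\infty)$ of the strictly increasing function
\[
\varphi(x)=x-a+1-\frac{ab}{x-b+1}-\frac{3a}{x-2}-\frac{sa}{x}.
\]
Now use $n-4=a+b+s-1$, $2s+k\le n-6$ and $s\le (n-k-6)/2$. These give
\[
\varphi(n-4)=\frac{s(n-3)}{2s+k}-\frac{3(s+k)}{n-6}-\frac{s(s+k)}{n-4}>\frac{s(n-6-k-s)-3k}{2s+k}\ge\frac{s(n-k-6)-6k}{2(2s+k)}>0
\]
whenever $n>7k+6$. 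Hence $\rho(H_s)<n-4<\rho(H_0)$ for every $s\ge 1$.

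This is the same device as Case 3 of the paper's proof of Theorem 1.4, where $\rho^*$ is pushed below the clique bound $n-\delta+k-4$. It also shows that the hypothesis $n\ge\max\{k+56,23k+20\}$ is not what drives this step. Your explanation that it is ``needed for the middle range of $s$ and for $s=1$'' should be dropped.
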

	
	Motivated by the above result, our first main result gives
	a lower bound based on the size to ensure that a $(k+1)$-connected graph with fractional $k$-factor-criticality is $k$-factor-critical.
	
	\noindent\begin{theorem}\label{th:1.3.}
		Let \( G \) be a $(k+1)$-connected graph with fractional \( k \)-factor-criticality of order \( n \ge \max\{6\delta-5k+8,~ \frac{\delta^2}{6}-(\frac{k}{3}-\frac{3}{2})\delta+\frac{k^2}{6}-\frac{k}{2}+4\} \), where \( \delta \) is the minimum degree of \( G \), \( k \ge 1 \) is an integer and \( n \equiv k \pmod{2} \). If
		\[
		|E(G)| \ge |E(K_\delta \vee (K_{n-2\delta + k-3} \cup K_3 \cup (\delta-k)K_1))|,
		\]
		then \( G \) is \( k \)-factor-critical unless \( G \cong K_\delta \vee (K_{n-2\delta + k-3} \cup K_3 \cup (\delta-k)K_1) \).
	\end{theorem}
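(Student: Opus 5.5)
We argue by contradiction. Suppose $G$ satisfies the hypotheses but is not $k$-factor-critical. Then there is a set $S$ with $|S|=k$ such that $G-S$ has no perfect matching. Since $n\equiv k \pmod 2$, Tutte's theorem and parity give a set $T\subseteq V(G)\setminus S$ with $o(G-S-T)\ge |T|+2$. Put $X=S\cup T$ and $s=|X|\ge k$, so that $q:=o(G-X)\ge s-k+2$.

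\textbf{Step 1: $s\ge k+1$.} If $T=\emptyset$, then $G-S$ has at least two components, which contradicts $(k+1)$-connectivity. Hence $s\ge k+1$.

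\textbf{Step 2: use fractional criticality.} I will use the standard characterization: $G$ is fractional $k$-factor-critical iff $i(G-Y)\le |Y|-k$ for every $Y$ with $|Y|\ge k$. Applied to $Y=X$, this gives $i(G-X)\le s-k$. Therefore, among the $q\ge s-k+2$ odd components, at least two have order at least $3$.

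\textbf{Step 3: pass to a complete extremal graph.} We add edges to $G$: make $X$ a clique, join $X$ to everything, make each odd component a clique, and absorb the even components into one odd component. Merging surplus odd components in pairs preserves parity, so we may take exactly $q=s-k+2$ odd components. Adding edges only increases $|E(G)|$, so $G$ is a spanning subgraph of $K_s\vee(K_{n_1}\cup\cdots\cup K_{n_q})$ with $n_1\ge n_2\ge\cdots\ge n_q$, all $n_i$ odd, and $n_{q-1}\ge 3$. The minimum degree also constrains the component sizes: a vertex in a component of order $n_i$ has degree at most $n_i-1+s$, so $n_i\ge \delta-s+1$ for every $i$. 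A standard moving-vertices argument shows that, subject to these lower bounds, the edge count is maximized by making all components but one as small as allowed.

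\textbf{Step 4: case $s\ge\delta$.} Here the optimal configuration is $s-k$ singletons, one $K_3$ and one large clique, giving
\[
G_s=K_s\vee(K_{n-2s+k-3}\cup K_3\cup(s-k)K_1).
\]
The size $|E(G_s)|$ is a convex quadratic in $s$ on $[\delta,(n+k-3)/2]$, so its maximum is attained at an endpoint. We compare $s=\delta$ with the right endpoint, whose graph has roughly $(n+k-3)/2$ vertices in the join set and few edges elsewhere; the hypothesis $n\ge 6\delta-5k+8$ should make the $s=\delta$ endpoint strictly larger. This gives $|E(G)|\le |E(G_\delta)|$, with equality only if $s=\delta$ and $G\cong G_\delta$, which is the stated exception.

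\textbf{Step 5: case $k+1\le s\le\delta-1$.} Now every component has at least $\delta-s+1\ge 2$ vertices, and since sizes are odd, at least $\delta-s+1$ or $\delta-s+2$. The extremal graph is
\[
K_s\vee\bigl(K_{n-s-(q-1)(\delta-s+1)}\cup (q-1)K_{\delta-s+1}\bigr),\qquad q=s-k+2.
\]
I will show its size is strictly less than $|E(K_\delta\vee(K_{n-2\delta+k-3}\cup K_3\cup(\delta-k)K_1))|$. The difference of the two sizes is a function of $n$ that is linear with positive slope, and its coefficients depend on $s$, $\delta$ and $k$. I expect the quadratic lower bound $n\ge \frac{\delta^2}{6}-(\frac{k}{3}-\frac32)\delta+\frac{k^2}{6}-\frac k2+4$ to be exactly what makes this difference positive; the worst case should be near $s=k+1$ or at the midpoint of the range. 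Steps 4 and 5 together contradict the edge hypothesis unless $G$ equals the extremal graph.

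\textbf{Main obstacle.} I expect Step 5 to be the hardest. The difference is a polynomial in two parameters $s$ and $\delta$, and it is not monotone in $s$. My first approach is to treat it as a function of $s$, check concavity or convexity, evaluate it at $s=k+1$ and $s=\delta-1$, and separately handle the parity adjustment when $\delta-s+1$ is even.
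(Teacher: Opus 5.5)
You have the paper's skeleton: a Favaron/Tutte set $X$, $(k+1)$-connectivity forcing $s\ge k+1$, fractional criticality forcing two odd components of order at least $3$, embedding into a complete join, and splitting at $s$ versus $\delta$. But Step 5, the decisive step, is unproved and rests on a false claim.

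Write $G_\delta=K_\delta\vee(K_{n-2\delta+k-3}\cup K_3\cup(\delta-k)K_1)$ and $H_s=K_s\vee(K_{n-s-(q-1)(\delta-s+1)}\cup(q-1)K_{\delta-s+1})$. The difference $|E(G_\delta)|-|E(H_s)|$ is linear in $n$ with slope $(s-k)(\delta-s)-2$, which is not positive in general. At $s=k+1$, $\delta=k+2$ your graph is $H_s=K_{k+1}\vee(K_{n-k-5}\cup 2K_2)$. There the difference equals $k+12-n<0$ for every admissible $n$; for example, $k=1$, $\delta=3$, $n=21$ gives $|E(H_s)|=146$ against $|E(G_\delta)|=138$.

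So the parity adjustment you postpone is not cosmetic. Unless you use that every component of $G-X$ has odd order, hence order at least $3$, your claimed inequality is false. The paper builds this in by keeping a $K_3$ in $G^*=K_s\vee(K_{n-s-(s-k)(\delta+1-s)-3}\cup K_3\cup(s-k)K_{\delta+1-s})$. It treats $s=k+1$ separately in Lemma 2.7, where the difference is the constant $\binom{\delta-k}{2}+3(\delta-k-1)$.

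Even after fixing parity the slope can vanish, at $s=k+1$, $\delta=k+3$. So you still must control the constant term and the dependence on $s$, which is exactly the part you leave as ``I expect''. In the paper this is the real content. It shows $|E(G_\delta)|-|E(G^*)|=\frac12(\delta-s)f(s)$ for an explicit cubic $f$. Then $f'(x)\ge f'(\frac{2k+\delta+3}{3})\ge 0$ holds precisely because of the quadratic lower bound on $n$, and $f(k+2)=2n+k-3\delta-1>0$ follows from $n\ge 6\delta-5k+8$.

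Step 4 also needs repair. Because the two largest components have order at least $3$, the admissible range is $\delta\le s\le (n+k-6)/2$, not up to $(n+k-3)/2$. At your endpoint the difference $|E(G_\delta)|-|E(G_s)|=\frac12(s-\delta)(2n-3s-3\delta+4k-13)$ becomes $\frac14(s-\delta)(n-6\delta+5k-17)$, which is negative for $n<6\delta-5k+17$.

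With the correct endpoint you get $\frac14(s-\delta)(n-6\delta+5k-8)\ge 0$, as in the paper's Case 1, but equality does occur. For $k=1$, $\delta=2$, $n=15$, $s=5$, both $K_5\vee(2K_3\cup 4K_1)$ and $K_2\vee(K_9\cup K_3\cup K_1)$ have $66$ edges. So ``strictly larger'' needs one more remark: equality would force $G\cong K_s\vee(2K_3\cup(s-k)K_1)$, whose minimum degree is $s>\delta$. The paper's Case 1 also asserts $>0$ here without this remark.
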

	
	Our second main result provides a sufficient condition in terms of the spectral radius for a $(k+1)$-connected graph with fractional $k$-factor-criticality to be $k$-factor-critical.
	
	\noindent\begin{theorem}\label{th:1.4.}
		Let \( G \) be a $(k+1)$-connected graph with fractional \( k \)-factor-criticality of order \( n \ge \max\{8\delta-5k+20,~ \delta(\delta-k-1)^2 - 2\delta-1\} \), where \( \delta \) is the minimum degree of \( G \), \( k \ge 1 \) is an integer and \( n \equiv k \pmod{2} \). If
		\[
		\rho(G) \ge \rho(K_\delta \vee (K_{n-2\delta + k-3} \cup K_3 \cup (\delta-k)K_1)),
		\]
		then \( G \) is \( k \)-factor-critical unless \( G \cong K_\delta \vee (K_{n-2\delta + k-3} \cup K_3 \cup (\delta-k)K_1) \).
	\end{theorem}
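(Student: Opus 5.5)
We argue by contradiction, following the standard Tutte--Berge/stability approach used for Theorem \ref{th:1.3.}. Suppose $G$ is not $k$-factor-critical. By the characterization of $k$-factor-critical graphs (Favaron, Yu), there is $S\subseteq V(G)$ with $|S|=s\ge k$ and $o(G-S)\ge s-k+2$; parity ($n\equiv k \pmod 2$) upgrades this to $o(G-S)\ge s-k+2$ with the right parity. Fractional $k$-factor-criticality gives $i(G-S)\le s-k$ for every such $S$, since removing $k$ vertices of $S$ leaves a graph with a fractional perfect matching, which forces $i(G-S')\le|S'|$ for all $S'$. Hence among the at least $s-k+2$ odd components of $G-S$, at most $s-k$ are isolated vertices, so at least two odd components have order at least $3$. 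The $(k+1)$-connectivity gives $s\ge k+1$ in nontrivial cases. So $G$ is a spanning subgraph of
\[
G_1:=K_s\vee (K_{n_1}\cup K_{n_2}\cup \cdots),
\]
with $s-k$ isolated vertices and two odd cliques of order $\ge 3$. After merging the large components, $G$ is a spanning subgraph of $H(s,a)=K_s\vee(K_{n-s-(s-k)-a}\cup K_a\cup (s-k)K_1)$ with $a\ge 3$ odd and $n-2s+k-a\ge 3$.

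Since $A(H)$ and $\rho$ are monotone under adding edges, it suffices to show $\rho(H(s,a))\le \rho(K_\delta\vee(K_{n-2\delta+k-3}\cup K_3\cup(\delta-k)K_1))$ with equality only for the extremal graph. Each isolated vertex of $G-S$ has all its neighbours in $S$, so $\delta\le s$ whenever $s-k\ge1$. If $s=k$ there are no isolated vertices; we then use $\delta\le s+a-1$ for vertices in $K_a$, or handle the case directly (it reduces to the Jia--Fan--Liu-type graph $K_k\vee(K_{n-k-3}\cup K_3)$, a subgraph-comparable case). We proceed in two steps. First, fixing $s$, we show that shrinking $a$ to $3$ (moving vertices into the big clique) increases $\rho$; this is the standard Kelmans/Perron-vector transfer argument using $x_u\ge x_v$ for vertices in larger cliques. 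Second, we compare across $s$ in two regimes.

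In the regime $\delta\le s\le s_0$ (with $s$ bounded in terms of $n$), we compute the equitable quotient matrix of $K_s\vee(K_{n-2s+k-3}\cup K_3\cup(s-k)K_1)$. This is a $4\times 4$ matrix whose characteristic polynomial is $f_s(x)$. We show $f_s(x)-f_\delta(x)>0$ for $x\ge \rho$ of the extremal graph when $s>\delta$, using $n\ge \delta(\delta-k-1)^2-2\delta-1$ and $n\ge 8\delta-5k+20$. In the regime where $s$ is large (roughly $s\ge n/4$ or so), we instead use the edge bound $\rho\le\sqrt{2m-n+1}$ (Hong). We also use that $|E(H(s,3))|$ is much smaller than $\binom{n-\delta}{2}$, while $\rho$ of the extremal graph is at least $\rho(K_{n-\delta+k})\approx n-\delta+k-1$, since it contains $K_{n-\delta+k-3}$ together with $K_\delta$.

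The main obstacle is the polynomial comparison in the middle range of $s$. There the two quotient polynomials are close, and showing the sign of $f_s-f_\delta$ requires a careful lower estimate $\rho>n-\delta+k-4$ or similar, plus a factorization of $f_s(x)-f_\delta(x)$ as $(s-\delta)\,g(x)$ with $g(x)>0$ on $[n-\delta+k-4,\infty)$. This is where the hypothesis $n\ge\delta(\delta-k-1)^2-2\delta-1$ should be consumed. Finally, equality in all steps forces $s=\delta$, $a=3$ and $G=H$, giving the stated extremal graph, which is itself $(k+1)$-connected, fractional $k$-factor-critical and not $k$-factor-critical (take $S=V(K_\delta)$).
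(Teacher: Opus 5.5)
There is a genuine gap: the range $k+1\le s\le\delta-1$ is never handled, and your argument does not rule it out. The failing step is the assertion that $G$ is a spanning subgraph of $H(s,a)=K_s\vee(K_{n-2s+k-a}\cup K_a\cup(s-k)K_1)$. From it you infer that $G-S$ has isolated vertices, and hence that $\delta\le s$. But fractional $k$-factor-criticality only says that \emph{at most} $s-k$ odd components of $G-S$ are singletons. The containment you actually have is $G\subseteq K_s\vee(K_{n_1}\cup\cdots\cup K_{n_{s-k+2}})$, with $n_3,\dots,n_{s-k+2}$ arbitrary odd numbers. Replacing those cliques by singletons is a spectral transfer (Lemma 2.3), not a subgraph relation, so it tells you nothing about $\delta(G)$.

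The case really occurs. For $k=1$, $\delta=4$ and odd $n$, the graph $K_2\vee(K_{n-8}\cup K_3\cup K_3)$ is $2$-connected, fractional $1$-factor-critical, and has minimum degree $4$. The only $S$ with $o(G-S)>|S|-1$ is $S=V(K_2)$, so $s=2<\delta$. In this range your comparison graph cannot work. $K_s\vee(K_{n-2s+k-3}\cup K_3\cup(s-k)K_1)$ contains a clique of order $n-s+k-3\ge n-\delta+k-2$, so its spectral radius exceeds $n-\delta+k-3$. Meanwhile, the spectral radius of the extremal graph minus $n-\delta+k-4$ tends to $0$ as $n\to\infty$. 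So the inequality you need is false for large $n$.

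The missing idea is the one you reserved for $s=k$, a case that $(k+1)$-connectivity excludes anyway. A vertex of a component $C$ of $G-S$ has degree at most $s+|C|-1$, so when $s<\delta$ every component of $G-S$ has at least $\delta+1-s$ vertices. The paper (its Case 3) feeds this into the generalized transfer Lemma 2.4 with $p=\delta+1-s$. This gives $\rho(G)\le\rho(K_s\vee(K_{n-s-(s-k)(\delta+1-s)-3}\cup K_3\cup(s-k)K_{\delta+1-s}))$.

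For $s=k+1$, this graph is compared with the extremal one by a Perron-vector edge switch (Lemma 2.5). For $k+2\le s\le\delta-1$, the eigen-equations show its spectral radius is below $n-(s-k)(\delta+1-s)-3\le n-\delta+k-4$. That is below the spectral radius of the extremal graph, which contains $K_{n-\delta+k-3}$. This is exactly where $n\ge\delta(\delta-k-1)^2-2\delta-1$ is consumed, not in the $s>\delta$ comparison as you predicted.

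For $s\ge\delta+1$, your split into quotient-polynomial and Hong-bound regimes is a plausible alternative. The paper needs no split there: $n\ge 2s-k+6$ forces $s\le(n+k-6)/2$, and a single bilinear Perron-vector estimate using only $n\ge 8\delta-5k+20$ suffices.
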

	
	One can check that $\rho(K_\delta \vee (K_{n-2\delta + k-3} \cup K_3 \cup (\delta-k)K_1)<\rho(K_k \vee (K_{n-k-3} \cup K_3))$ for $\delta\geq k + 1$ and $n\geq 2\delta -k + 6$.
	
	Indeed, Fan and Lin \cite{BY} posed an adjacency spectral condition for a connected graph with minimum degree to be $k$-factor-critical.
	
	\begin{theorem}[\cite{BY}]\label{th:1.5.}
		Suppose that $G$ is a connected graph of order $n \geq \max\{8\delta - 5k + 4,\, \delta(\delta - k)^2 + \delta - 1\}$ with minimum degree $\delta \geq k$, where $n \equiv k \pmod{2}$ and $k \geq 1$. If
		\[
		\rho(G) \geq \rho(K_\delta \vee (K_{n-2\delta+k-1} \cup (\delta - k + 1)K_1)),
		\]
		then $G$ is $k$-factor-critical unless $G \cong K_\delta \vee (K_{n-2\delta+k-1} \cup (\delta - k + 1)K_1)$.
	\end{theorem}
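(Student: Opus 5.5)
The plan is the standard ``Tutte set plus extremal graph'' argument. Suppose that $G$ satisfies the hypothesis of Theorem~\ref{th:1.5.} but is not $k$-factor-critical. The characterization of Favaron and Yu states that a graph of order $n\equiv k \pmod 2$ is $k$-factor-critical if and only if $o(G-S)\le |S|-k$ for every $S\subseteq V(G)$ with $|S|\ge k$. Hence there is a set $S$ with $|S|=s\ge k$ and $o(G-S)>s-k$, and by parity $o(G-S)\ge s-k+2$. Adding edges never decreases $\rho$. So $G$ is a spanning subgraph of $H=K_s\vee(K_{n_1}\cup\cdots\cup K_{n_{s-k+2}})$ with all $n_i$ odd, $n_1\ge\cdots\ge n_{s-k+2}$ and $\sum n_i=n-s$. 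We also have $s\le (n+k-2)/2$. Moving vertices from smaller cliques into the largest one strictly increases the spectral radius, by the usual Rayleigh quotient / Kelmans-type comparison. This move has to respect the minimum degree: every vertex in a small clique has degree at most $s+n_i-1$, so $n_i\ge \delta-s+1$ whenever $s<\delta$. I then split into two cases according to whether $s\ge\delta$ or $s\le\delta-1$.

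\emph{Case $s\ge\delta$.} Here the components may be shrunk to single vertices, and the merging step gives $\rho(G)\le\rho(H_s)$, where $H_s:=K_s\vee(K_{n-2s+k-1}\cup(s-k+1)K_1)$. I would take the equitable quotient matrix of $H_s$ with respect to the partition (join part, big clique, isolated part). Its largest root $f_s$ is the largest root of an explicit cubic $\varphi_s(x)$. The goal is to show $\rho(H_s)<\rho(H_\delta)$ for $\delta<s\le(n+k-2)/2$, with equality of graphs only when $s=\delta$ and $G=H_\delta$. I expect $\rho$ to be convex in $s$ in the relevant sense, so it suffices to check the two endpoints. The endpoint $s=(n+k-2)/2$ gives $H_s\cong K_s\vee (s-k+2)K_1$, which has $\rho<n/2+O(k)$. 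Meanwhile $\rho(H_\delta)>n-2\delta+k-2$ because $H_\delta$ contains $K_{n-\delta+k-1}$, and this strictly exceeds $n/2+O(k)$ once $n\ge 8\delta-5k+4$. For intermediate $s$ I would compare $\varphi_s(x)-\varphi_\delta(x)$ at $x=\rho(H_\delta)$ and show that it has the sign forcing $\rho(H_s)<\rho(H_\delta)$.

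\emph{Case $s\le\delta-1$.} Now each of the $s-k+2$ odd components has at least $\delta-s+1$ vertices. The extremal graph is therefore $K_s\vee(K_{n-s-(s-k+1)(\delta-s+1)}\cup(s-k+1)K_{\delta-s+1})$. Instead of computing its spectral radius, I would bound its size $m$ and use Hong's bound $\rho\le\sqrt{2m-n+1}$. Compared with the complete graph, this graph misses roughly $(s-k+1)(\delta-s+1)(n-\delta)$ edges. So $\rho^2\le (n-1)^2-2(s-k+1)(\delta-s+1)(n-\delta)+O(\delta^2)$, that is, $\rho\lesssim n-1-(\delta-k+1)+\text{small}$ in the worst admissible $s$. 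On the other side, $\rho(H_\delta)>n-\delta+k-2$. Making this comparison strict for all $k\le s\le\delta-1$ is exactly where the hypothesis $n\ge\delta(\delta-k)^2+\delta-1$ should enter: the quadratic-in-$s$ deficit is minimized at an endpoint, and the error terms are of order $\delta(\delta-k)^2/n$.

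The main obstacle is the sharp comparison in the case $s\ge\delta$ near $s=\delta+1$. There $\rho(H_s)$ and $\rho(H_\delta)$ are very close, so crude bounds fail and one must work with the cubic characteristic polynomials directly. I would compute $\varphi_{\delta+1}(x)-\varphi_\delta(x)$ in closed form and show it is positive for $x\ge n-\delta+k-2$. This reduces the whole case to a monotonicity statement that can then be iterated in $s$.
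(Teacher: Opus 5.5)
The paper only quotes this theorem from \cite{BY}. Its proof of Theorem~\ref{th:1.4.} runs the same scheme, and your proposal breaks exactly where that scheme needs a separate lemma.

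\textbf{The subcase $s=k$.} Your case $s\le\delta-1$ contains $s=k$, which is admissible whenever $\delta\ge k+1$ because $G$ is only assumed connected. The extremal graph there is $G^*=K_k\vee(K_{n-\delta-1}\cup K_{\delta-k+1})$. It contains $K_k\vee K_{n-\delta-1}=K_{n-\delta+k-1}$, the very clique behind your lower bound $\rho(H_\delta)>n-\delta+k-2$. So $\rho(G^*)>n-\delta+k-2$ as well, and no upper bound on $\rho(G^*)$ can be played off against that lower bound. The hypothesis $n\ge\delta(\delta-k)^2+\delta-1$ cannot help, since the leading terms coincide.

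For Hong's bound, this $G^*$ has $2m-n+1=(n-\delta+k-2)^2+(\delta-k+1)(\delta+k-1)$. So $\sqrt{2m-n+1}$ exceeds $n-\delta+k-2$ by about $(\delta-k+1)(\delta+k-1)/(2n)$, whereas $\rho(H_\delta)$ exceeds it only by about $(\delta-k+1)\delta^2/n^2$. Concretely, $k=1$, $\delta=2$, $n=15$ is allowed by the hypotheses and gives $\rho(G^*)\approx 12.006<\rho(H_\delta)\approx 12.052<\sqrt{148}\approx 12.166$.

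This subcase needs a direct comparison. Delete the edges of $K_{\delta-k+1}$ and join its vertices to $\delta-k$ vertices of $K_{n-\delta-1}$; this turns $G^*$ into $H_\delta$. The Perron entries on $K_{n-\delta-1}$ exceed those on $K_{\delta-k+1}$, so the Rayleigh quotient strictly increases. This is the same switching as in Lemma~\ref{le:2.5.}, which the paper needs at $s=k+1$ for the same reason.

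Only for $k+1\le s\le\delta-1$ is there room for a crude upper bound such as Hong's, or the eigen-equation estimate of Case~3 in the proof of Theorem~\ref{th:1.4.}, where the analogous lower bound on $n$ is used. There the largest clique has order $n-(s-k+1)(\delta-s+1)\le n-2(\delta-k)$. Consequently the delicate comparison is at $s=k$, not at $s=\delta+1$. The largest cliques of $H_{\delta+1}$ and $H_\delta$ differ by one vertex, so their spectral radii differ by about $1$.

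\textbf{The case $s\ge\delta$.} This part is not yet an argument.
\begin{itemize}
\item Convexity of $s\mapsto\rho(H_s)$ is only expected, not shown.
\item Your endpoint value is wrong. For $s=(n+k-2)/2$, $\rho(K_s\vee(s-k+2)K_1)\approx\frac{1+\sqrt{5}}{4}n$, not $n/2+O(k)$. The comparison with $n-\delta+k-2\ge\frac{7n}{8}+\frac{3k}{8}-\frac{3}{2}$ still holds, but only with a constant-factor margin.
\item The fallback of iterating a one-step inequality in $s$ fails. $\rho(H_s)$ is not monotone on $[\delta,(n+k-2)/2]$: for large $n$ it is about $0.76n$ at $s\approx 0.4n$ but about $0.81n$ at $s\approx n/2$. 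Moreover, $\varphi_{s+1}(x)-\varphi_s(x)=x^2-(2s-k+1)x+C_s$, with $C_s=(s+1)(s-k+2)(n-2s+k-4)-s(s-k+1)(n-2s+k-2)$, is negative at $x=n-\delta+k-2$ when $s=(n+k-4)/2$.
\end{itemize}

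What works is Case~1 of the paper's proof of Theorem~\ref{th:1.4.}: compare every $H_s$ with $s\ge\delta+1$ directly with $H_\delta$ in one step.
\begin{itemize}
\item Join $s-\delta$ of the isolated vertices to the big clique and to each other.
\item Delete the edges between $s-\delta$ vertices of $K_s$ and the remaining $\delta-k+1$ isolated vertices.
\item Use $(\rho'-\rho)\,\mathbf{y}^{T}\mathbf{x}=\mathbf{y}^{T}\bigl(A(H_\delta)-A(H_s)\bigr)\mathbf{x}$ with the explicit ratios of Perron entries.
\item Assume $\rho\ge\rho'$ and reach a contradiction via $s\le(n+k-2)/2$ and $n\ge 8\delta-5k+4$.
\end{itemize}
No monotonicity or convexity in $s$ is needed.
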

	
	We can also check that $\rho(K_\delta \vee (K_{n-2\delta + k-3} \cup K_3 \cup (\delta-k)K_1)<\rho(K_\delta \vee (K_{n-2\delta+k-1} \cup (\delta - k + 1)K_1))$ for $\delta\geq k + 1$ and $n\geq 2\delta - k + 6$.

	For clarity, the exceptional graph in Theorem \ref{th:1.3.} and Theorem \ref{th:1.4.} is shown in Figure 1.
	\begin{figure}[!htbp]
		\centering
		\includegraphics[scale=0.42]{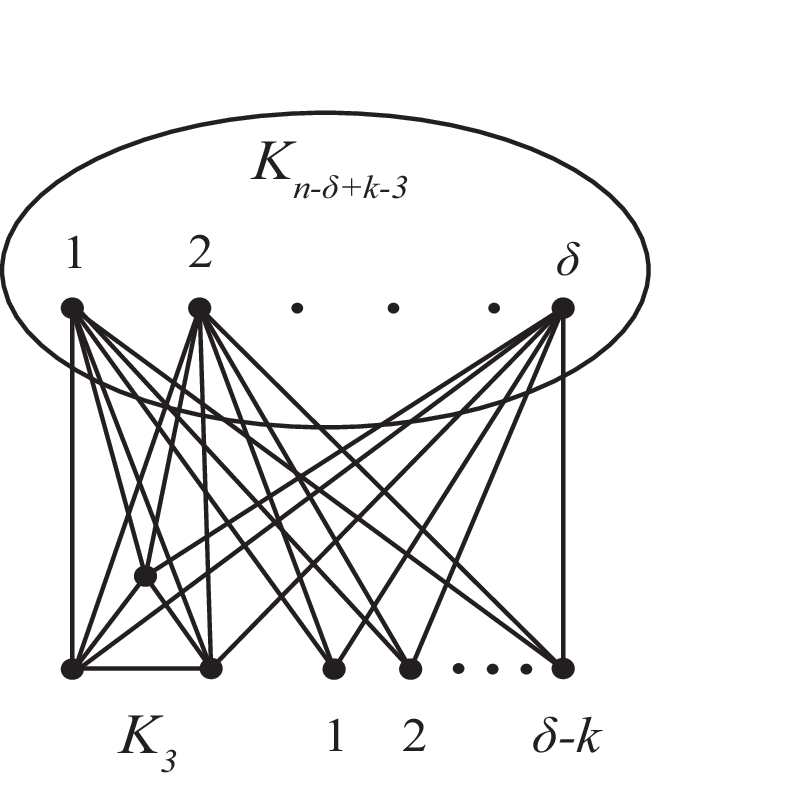}
		\caption{$K_\delta \vee (K_{n-2\delta + k-3} \cup K_3 \cup (\delta-k)K_1)$.}
	\end{figure}

	In the proof of Theorem 1.3 and Theorem 1.4, we assume that $G$ is $(k+1)$-connected, which leaves the question open of whether these results still hold for graphs with exactly connectivity $k$.
	
	The rest of this paper is organized as follows: In Section 2, some lemmas used in this paper are presented. In Section 3, we present the proof of Theorem 1.3. In Section 4, we present the proof of Theorem 1.4.
	
	\section{Preliminaries }
	\hspace{1.3em}
	In this section, we first present two fundamental characterizations that provide necessary and sufficient conditions for a graph to be $k$-factor-critical and fractional $k$-factor-critical, respectively.
	\noindent\begin{lemma}[\cite{C}]\label{le:2.1.}
		Let $G$ be a graph of order $n$ and $k\geq 1$ be an integer. Then $G$ is $k$-factor-critical if and only if
		$n\equiv k \pmod{2}$ and $o(G-S)\leq|S|-k$ for any $S\subseteq V(G)$ with $|S|\geq k$.
	\end{lemma}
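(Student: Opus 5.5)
The statement is the classical Tutte-type characterization of $k$-factor-criticality due to Favaron, and we will derive it from Tutte's 1-factor theorem: a graph $H$ has a perfect matching if and only if $o(H-T)\le |T|$ for every $T\subseteq V(H)$.

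\emph{Necessity.} Assume $G$ is $k$-factor-critical. Deleting any $k$ vertices leaves a graph of order $n-k$ with a perfect matching, so $n-k$ is even and $n\equiv k \pmod 2$. Now let $S\subseteq V(G)$ with $|S|\ge k$.
\begin{itemize}
\item Choose any $X\subseteq S$ with $|X|=k$ and put $H=G-X$. By hypothesis $H$ has a perfect matching.
\item Set $T=S\setminus X$, so $|T|=|S|-k$ and $H-T=G-S$.
\item Tutte's condition for $H$ then gives $o(G-S)=o(H-T)\le |T|=|S|-k$.
\end{itemize}

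\emph{Sufficiency.} Assume $n\equiv k\pmod 2$ and $o(G-S)\le |S|-k$ for every $S$ with $|S|\ge k$. Fix any $X\subseteq V(G)$ with $|X|=k$ and put $H=G-X$. We verify Tutte's condition for $H$.
\begin{itemize}
\item Take any $T\subseteq V(H)$ and let $S=X\cup T$. Since $X\cap T=\emptyset$, we have $|S|=k+|T|\ge k$.
\item Since $H-T=G-S$, the hypothesis gives $o(H-T)=o(G-S)\le |S|-k=|T|$.
\end{itemize}
Tutte's theorem therefore yields a perfect matching of $H$. As $X$ was arbitrary, $G$ is $k$-factor-critical.

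The only point needing care is the order of $H$. Its order $n-k$ is even by the parity assumption, which is consistent with the $T=\emptyset$ case of Tutte's condition, $o(H)=0$. No step is a genuine obstacle: the whole argument is the correspondence $S\leftrightarrow X\cup T$ between deletion sets in $G$ and in $H$.
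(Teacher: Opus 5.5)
Your proof is correct: it is the standard derivation of this characterization from Tutte's theorem via the correspondence $S = X\cup T$. The paper gives no proof of its own and only cites Favaron, so there is nothing to compare beyond noting that this is the usual argument behind that result. The one thing worth making explicit is the tacit hypothesis $k<n$ from the definition of $k$-factor-criticality. For $k\ge n$ the right-hand side holds vacuously (or trivially when $k=n$), and your step ``fix any $X$ with $|X|=k$'' then breaks down. That is an imprecision in the statement as quoted, not in your reasoning.
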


	\noindent\begin{lemma}[\cite{M}]\label{le:2.2.}
		Let $G$ be a graph of order $n$ and $1\leq k\leq n-2$ be an integer. Then $G$ is fractional $k$-factor-critical if and only if
		$i(G-S)\leq|S|-k$ for any $S\subseteq V(G)$ with $|S|\geq k$.
	\end{lemma}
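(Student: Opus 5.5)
The plan is to reduce Lemma \ref{le:2.2.} to the $k=0$ case, namely the fractional analogue of Tutte's theorem: \emph{a graph $H$ has a fractional perfect matching if and only if $i(H-S)\le |S|$ for every $S\subseteq V(H)$.} I would first prove (or quote from Scheinerman--Ullman) this criterion, and then derive the $k$-version from it by a direct set manipulation.

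\textbf{Reduction.} Suppose $G$ is fractional $k$-factor-critical and let $S\subseteq V(G)$ with $|S|\ge k$. Choose $T\subseteq S$ with $|T|=k$ and put $S'=S\setminus T$. By assumption $H=G-T$ has a fractional perfect matching. The criterion applied to $H$ and $S'$ gives
\[
i(G-S)=i(H-S')\le |S'|=|S|-k .
\]
Conversely, assume the inequality holds and let $T$ be any $k$-set. Since $k\le n-2$, the graph $H=G-T$ is nonempty. For $S'\subseteq V(H)$ set $S=T\cup S'$; then $|S|\ge k$ and $i(H-S')=i(G-S)\le |S|-k=|S'|$. So $H$ has a fractional perfect matching by the criterion.

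\textbf{The $k=0$ criterion.} Necessity is a counting argument. If $h$ is a fractional perfect matching and $I$ is the set of isolated vertices of $H-S$, then every edge at a vertex of $I$ goes to $S$. Hence
\[
|I|=\sum_{v\in I}\sum_{e\in E(v)}h(e)\le \sum_{u\in S}\sum_{e\in E(u)}h(e)=|S| .
\]
For sufficiency I would use the bipartite double cover $B=H\times K_2$, with parts $V'$ and $V''$ and $u'v''\in E(B)$ whenever $uv\in E(H)$.

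If $B$ has a perfect matching $M$, define $h(uv)=\tfrac12\bigl([u'v''\in M]+[v'u''\in M]\bigr)$. This is a fractional perfect matching of $H$. By Hall's theorem, $B$ has a perfect matching unless there is some $X\subseteq V(H)$ with $|N_H(X)|<|X|$.

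\textbf{Main obstacle.} The hardest step is converting a Hall violator $X$ into a set $S$ with $i(H-S)>|S|$. Vertices of $X\setminus N(X)$ are isolated in $H-N(X)$, which gives $i(H-N(X))\ge |X\setminus N(X)|$. The trouble is the overlap $X\cap N(X)$. I would choose $X$ maximizing the deficiency $|X|-|N(X)|$, and among those take $X$ minimal. I would then show that $X\cap N(X)=\emptyset$: deleting from $X$ a vertex lying in $X\cap N(X)$, together with a suitable neighbour, should not decrease the deficiency, which contradicts minimality.

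An alternative route for this step is LP duality. The fractional matching polytope has half-integral vertices, whose supports are disjoint edges and odd cycles. A minimum vertex cover certificate of the dual LP then yields the required set $S$ directly. If either route becomes messy, I would simply cite the known fractional Tutte theorem and present only the reduction above, which is the real content of the lemma.
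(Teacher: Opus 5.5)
The paper gives no proof of this lemma; it simply quotes it from \cite{M}. Your reduction to the case $k=0$ is correct in both directions, including the use of $k\le n-2$ to make $G-T$ nonempty. Citing the fractional Tutte theorem of Scheinerman and Ullman for $H=G-T$ would therefore already give a complete proof. Your necessity count is also correct, as is the passage from a perfect matching of the double cover $B$ to a fractional perfect matching of $H$.

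The one step that would not go through as you describe it is the Hall-violator step. Deleting a single vertex of $X\cap N(X)$, with or without a neighbour, need not preserve the deficiency, even when $X$ is deficiency-maximizing. For example, let $H$ have a vertex $z$ joined to three pendant vertices $l_1,l_2,l_3$ and to every vertex of a triangle $abc$. Then $X=\{l_1,l_2,l_3,a,b,c\}$ has $N(X)=\{z,a,b,c\}$ and deficiency $2$, which is the maximum. Yet removing one or two of $a,b,c$ from $X$ drops the deficiency to $1$. Only removing the whole overlap $X\cap N(X)$ at once keeps it at $2$.

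No extremal choice is needed if you take $S=N(X)\setminus X$ for an arbitrary violator $X$. Put $I=X\setminus N(X)$. A vertex of $I$ has no neighbour in $X$, since otherwise it would lie in $N(X)$. So all its neighbours are in $S$, and $I$ consists of isolated vertices of $H-S$. Since $X\cap N(X)$ is counted in both $|X|$ and $|N(X)|$, we get
\[
i(H-S)\ \ge\ |I| \;=\; |X|-|X\cap N(X)| \;=\; |S|+\bigl(|X|-|N(X)|\bigr) \;>\; |S| .
\]
Equivalently, $X\setminus N(X)$ is always at least as deficient as $X$; this is the correct form of your minimality step.

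If you prefer the LP route, the property you actually need is half-integrality of an optimal fractional vertex cover $y$ (the dual LP), not of the matching polytope. Given such a $y$, the sets $S=\{v: y_v=1\}$ and $I=\{v: y_v=0\}$ do the job.
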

	
	\noindent\begin{lemma}[\cite{FDL}]\label{le:2.3.}
		Let $n=\sum_{i=1}^{t} n_i + s$ with $s\geq 1$. If $n_1\geq n_2 \geq \cdots \geq n_t\geq 1$, $ n_2 \geq 3$ and $n_1 < n-s-t-1$, then
		$$\rho(K_s\vee(K_{n_1}\cup K_{n_2}\cup \cdots \cup K_{n_t}))<
		\rho(K_s\vee (K_{n-s-t-1} \cup K_3 \cup (t-2)K_1)).$$
		
	\end{lemma}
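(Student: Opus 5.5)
The plan is to reach the extremal graph by repeatedly moving single vertices from a smaller clique into the largest clique $K_{n_1}$, and to show that each move strictly increases the spectral radius. Write $G=K_s\vee(K_{n_1}\cup\cdots\cup K_{n_t})$, $\rho=\rho(G)$, and let $x$ be the positive Perron vector of $A(G)$. Since $G$ is connected (because $s\ge 1$), $x$ exists and is positive.

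\textbf{Step 1: structure of the Perron vector.} Vertices in the same part are similar under automorphisms of $G$, so $x$ is constant on each part. Let $y$ be its value on $K_s$ and $x_i$ its value on $K_{n_i}$. The eigen-equation at a vertex of $K_{n_i}$ reads $\rho x_i=(n_i-1)x_i+sy$, so
$$x_i=\frac{sy}{\rho-n_i+1}.$$
Since $\rho>n_1-1$ (the graph contains $K_{n_1+s}$), this shows that $x_i$ is a strictly increasing function of $n_i$. In particular, $n_i\ge n_j$ implies $x_i\ge x_j$.

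\textbf{Step 2: the moving operation.} Take $i\neq j$ with $n_i\ge n_j\ge 2$ and a vertex $v\in K_{n_j}$. Delete the $n_j-1$ edges from $v$ to the rest of $K_{n_j}$, and join $v$ to all $n_i$ vertices of $K_{n_i}$. The result $G'$ is again of the form $K_s\vee(\text{cliques})$, with $n_i$ increased by one and $n_j$ decreased by one. By the Rayleigh principle,
$$\rho(G')-\rho\ \ge\ x^{T}(A(G')-A(G))x=2x_j\bigl(n_ix_i-(n_j-1)x_j\bigr)>0,$$
because $n_i>n_j-1$ and $x_i\ge x_j>0$. So $\rho(G')>\rho(G)$. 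The number of parts $t$ and the order $n$ are unchanged.

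\textbf{Step 3: iterating.} Always take $i=1$ as the receiving clique; it remains the largest throughout, so the condition $n_i\ge n_j$ in Step 2 always holds.
\begin{enumerate}[(a)]
\item For each $j\ge 3$, move vertices of $K_{n_j}$ into $K_{n_1}$ one at a time until $n_j=1$.
\item If $n_2>3$, move vertices of $K_{n_2}$ into $K_{n_1}$ until $n_2=3$. This is possible because the hypothesis gives $n_2\ge 3$.
\end{enumerate}
The final graph is $K_s\vee(K_{n-s-t-1}\cup K_3\cup(t-2)K_1)$, and each move strictly increases $\rho$.

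The one point that needs checking is that at least one move is actually performed, since otherwise the inequality would not be strict. If no move occurs, then already $n_j=1$ for all $j\ge3$ and $n_2=3$, which forces $n_1=n-s-t-1$. This contradicts the hypothesis $n_1<n-s-t-1$, so at least one move occurs and the inequality in the lemma is strict. The main care overall lies in the monotonicity of $x_i$ in $n_i$ from Step 1, which is exactly what makes each move increase the Rayleigh quotient in Step 2.
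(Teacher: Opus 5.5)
Your proof is correct, but it takes a different route from the paper's. The paper does not prove this lemma itself (it is quoted from the literature). However, its proof of the generalization, Lemma~2.4, gives a proof of this statement when $p=1$, and that proof makes the whole transformation in one step. It deletes the edge set $E_1$ joining the vertices to be moved to the vertices that stay behind in each $K_{n_j}$ ($j\ge 2$). It adds the edges $E_2$ from all moved vertices to $K_{n_1}$, together with the cross edges $E_3$ among moved vertices from different cliques. It then evaluates a single Rayleigh quotient with the Perron vector of the original graph, using only $x_1\ge x_j$.

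You instead move one vertex at a time into $K_{n_1}$ and recompute the Perron vector of the current graph at each step. Each move changes the quotient by exactly $2x_j\bigl(n_1x_1-(n_j-1)x_j\bigr)$, so there are no cross terms and the sign check is immediate. Strictness also follows cleanly from your observation that at least one move must occur. The cost is an iteration: you must check that every intermediate graph is still of the form $K_s\vee(\text{cliques})$ with $K_{n_1}$ the largest clique, and you do check this.

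The paper's one-step version avoids iteration but needs heavier edge bookkeeping. Its argument for why the difference is strictly positive is also somewhat implicit: it rests on the existence of some $n_j>p$ or $n_2>3$. Both arguments rely on the same key fact, that the Perron entry on a clique increases with the clique's size. Your argument also extends to Lemma~2.4 essentially verbatim: stop shrinking each $K_{n_j}$, $j\ge 3$, at size $p$ instead of $1$. Every move is still legal there, since $n_j>p\ge 1$ forces $n_j\ge 2$.
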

	
	According to Lemma 2.3, we give the following critical lemma which generalizes the above lemma.
	
	\noindent\begin{lemma}\label{le:2.4.}
		Let $n=\sum_{i=1}^{t} n_i + s$ with $s\geq 1$. If $n_1\geq n_2\geq \cdots \geq n_t\geq p\geq 1$, $n_2 \geq 3$ and $n_1 < n-s-(t-2)p-3$, then
		$$\rho(K_s\vee(K_{n_1}\cup K_{n_2}\cup \cdots \cup K_{n_t}))<
		\rho(K_s\vee (K_{n-s-(t-2)p-3} \cup K_3 \cup (t-2)K_p)).$$	
	\end{lemma}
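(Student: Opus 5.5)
We prove Lemma \ref{le:2.4.} by a sequence of clique-moving operations, each of which strictly increases the spectral radius, until we reach the extremal graph $K_s\vee (K_{n-s-(t-2)p-3} \cup K_3 \cup (t-2)K_p)$. The basic tool is the standard vertex-transfer fact. Let $G=K_s\vee(K_{a}\cup K_{b}\cup H)$ with $a\ge b\ge 2$, and let $G'=K_s\vee(K_{a+1}\cup K_{b-1}\cup H)$ be obtained by moving one vertex from the smaller clique to the larger one. Then $\rho(G')>\rho(G)$.

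To see this, take the Perron vector $x$ of $G$. By symmetry $x$ is constant on each clique, say $x_u$ on $K_a$ and $x_v$ on $K_b$. The eigen-equations give
\[
(\rho+1)x_u=(a)x_u+\Sigma,\qquad (\rho+1)x_v=b\,x_v+\Sigma ,
\]
where $\Sigma$ is the contribution of $K_s$. Hence $x_u\ge x_v$ when $a\ge b$. Now delete the $b-1$ edges from a vertex $w\in K_b$ to the rest of $K_b$, and add the $a$ edges from $w$ to $K_a$. The Rayleigh quotient changes by $2x_w\big(a x_u-(b-1)x_v\big)>0$. So $\rho(G')>\rho(G)$, with strictness following from $x$ not being an eigenvector of $G'$.

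With this fact in hand, the proof proceeds in three steps.

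\emph{Step 1: shrink $K_{n_3},\dots,K_{n_t}$ to size $p$.} Start from $K_s\vee(K_{n_1}\cup\cdots\cup K_{n_t})$. For each $i\ge 3$ with $n_i>p$, repeatedly move vertices from $K_{n_i}$ into $K_{n_1}$ until $n_i=p$. This is allowed because $n_1\ge n_i$ throughout, and $n_1$ only grows.

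\emph{Step 2: shrink $K_{n_2}$ to size $3$.} Similarly, move vertices from $K_{n_2}$ to $K_{n_1}$ until $n_2=3$; this uses $n_2\ge 3$.

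Every move in Steps 1 and 2 strictly increases $\rho$. The total number of moves is positive precisely because $n_1<n-s-(t-2)p-3$. Indeed, this inequality says $n_2+\cdots+n_t>3+(t-2)p$, so at least one move occurs.

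\emph{Step 3: identify the endpoint.} After these moves the graph is exactly $K_s\vee(K_{n-s-(t-2)p-3}\cup K_3\cup (t-2)K_p)$, which gives the lemma.

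There is a subtlety in the base of the transfer. When $b=p=1$, the vertex being moved has no internal edges, and the formula above still applies with $b-1=0$. Since we never shrink a clique below $p\ge 1$, the case $b\ge 2$ is the only one where a move actually happens.

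The main obstacle is making the Rayleigh-quotient comparison rigorous with strict inequality. In particular, we must verify $x_u\ge x_v$ from the eigen-equations when $a\ge b$, which follows from $(\rho+1-a)x_u=(\rho+1-b)x_v$ together with $\rho+1>a$. Alternatively, one could reduce to Lemma \ref{le:2.3.}-type arguments via equitable quotient matrices. I would try the Perron-vector transfer first, since it handles all cliques uniformly.
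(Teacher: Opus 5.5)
Your proof is correct, but it is organized differently from the paper's. The paper does the whole transformation in one shot, using only the Perron vector of the original graph $K_s\vee(K_{n_1}\cup\cdots\cup K_{n_t})$ and the inequality $x_1\ge x_j$. It makes three edge changes at once:
\begin{itemize}
\item it deletes $E_1$, the edges joining the $n_2-3$ and $n_i-p$ vertices to be moved to the vertices that stay;
\item it adds $E_2$, joining the moved vertices to $K_{n_1}$;
\item it adds $E_3$, joining moved vertices coming from different cliques.
\end{itemize}
It then checks that $\mathbf{x}^T(A(G')-A(G))\mathbf{x}$ is a sum of nonnegative terms, at least one of which is positive because $n_1<n-s-(t-2)p-3$.

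You instead iterate a one-vertex transfer lemma and recompute the Perron vector of each intermediate graph. This buys you a simple, reusable local step with no $E_3$ cross terms and no simultaneous bookkeeping, and it makes the role of each hypothesis transparent. The conditions $n_2\ge 3$ and $n_j\ge p$ mean cliques are only ever shrunk, and $n_1<n-s-(t-2)p-3$ says the number of moves $(n_2-3)+\sum_{i\ge 3}(n_i-p)$ is positive. The cost is an induction over these moves, plus the need to keep the invariant $a\ge b$ at each step. You handle that correctly: the receiving clique only grows and the donor only shrinks.

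Three small remarks.
\begin{itemize}
\item Strictness needs no appeal to $\mathbf{x}$ not being an eigenvector of $G'$. The chain $\rho(G')\ge \mathbf{x}^TA(G')\mathbf{x}>\mathbf{x}^TA(G)\mathbf{x}=\rho(G)$ already gives it.
\item Your paragraph on $b=p=1$ is unnecessary and slightly muddled. A vertex is only moved out of a clique whose current size exceeds $p\ge 1$ (or exceeds $3$, for $K_{n_2}$), so $b\ge 2$ automatically.
\item The inequality $\rho+1>a$ should be justified explicitly. It holds because $K_a$ is a proper subgraph of the connected graph $G$ (connected since $s\ge 1$), so $\rho(G)>a-1$. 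Connectivity is also what makes the Perron vector unique and hence constant on each clique.
\end{itemize}
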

	\noindent\textbf{Proof.}
	
	Let $G=K_s\vee(K_{n_1}\cup K_{n_2}\cup \cdots \cup K_{n_t})$ and $G'= K_s\vee (K_{n-s-(t-2)p-3} \cup K_3 \cup (t-2)K_p)$. The vertex set of $G$ can be partitioned as $V(G) = V(K_s) \cup V(K_{n_1}) \cup V(K_{n_2}) \cup \cdots \cup V(K_{n_t})$, where $V(K_s)= \{v_1, v_2, \ldots, v_s\}$ and $ V(K_{n_i}) =\{u_{i1}, u_{i2}, \ldots, u_{in_i}\} $ for $ 1\leq i\leq t$. Let
	\begin{align*}
		E_1=&\{u_{ik}u_{il} \mid 3\leq i\leq t,\ 1\leq k\leq n_i - p,\ n_i - p + 1\leq l\leq n_i\}
		\\&\cup \{u_{2k}u_{2l} \mid  1\leq k\leq n_2 - 3,\ n_2 - 2\leq l\leq n_2\}, \\[2pt]
		E_2=&\{u_{1k}u_{ij} \mid 1\leq k\leq n_1,\ 3\leq i\leq t,\ 1\leq j\leq n_i - p\}
		\\&\cup \{u_{1k}u_{2j} \mid 1\leq k\leq n_1,
		\ 1\leq j\leq n_2 - 3\}, \\[2pt]
		E_3=&\{u_{ij}u_{kl} \mid 3\leq i\leq t-1,\ i+1\leq k\leq t;\ 1\leq j\leq n_i - p,\ 1\leq l\leq n_k - p\}
		\\& \cup \{u_{2j}u_{kl} \mid \ 3\leq k\leq t,\ 1\leq j\leq n_2 - 3,\ 1\leq l\leq n_k - p\}.
	\end{align*}
	It is straightforward to check that $G + E_2+ E_3 - E_1 \cong G'$.
	
	Let $ \mathbf{x} = (x_1, x_2,\ldots, x_n)^T $ be the Perron vector of $ A(G)$ corresponding to $\rho(G)$, and let $ x_i $ denote the entry of $ \mathbf{x} $ corresponding to the vertex $ v_i \in V(G) $. By symmetry, let $ x_r = y $ for all $ r\in V(K_s) $ and $ x_r =x_i $ for all $ r \in V(K_{n_i}) $, where $1\leq i\leq t$. Then, from $ A(G)\mathbf{x} = \rho(G)\mathbf{x} $, we get
	$$\rho(G)x_1 = sy + (n_1 - 1)x_1, ~\rho(G)x_j = sy + (n_j - 1)x_j~ (j=2, 3, \ldots , t).$$
	Then, we have
	$$[\rho(G)-(n_1 - 1)]x_1 =[\rho(G)-(n_j - 1)]x_j, ~2\leq j\leq t.$$
	Note that $K_{n_1}$ and $K_{n_j}$ are two proper subgraphs of $G$, we have $\rho(G)>\rho(K_{n_1})=n_1 - 1$ and $\rho(G)>\rho(K_{n_j})=n_j - 1$. Together with $n_1 \geq n_2\geq \cdots \geq n_t\geq p\geq 1$, we obtain $x_1\geq x_j$ for $2\leq j\leq t$.
	According to the Rayleigh quotient, we have
	\begin{align*}
		\rho(G') - \rho(G) &\ge \mathbf{x}^T (A(G') - A(G)) \mathbf{x} \\
		&= 2\Bigl[ n_1(n_2-3)x_1x_2 + \sum_{j=3}^{t} n_1(n_j-p)x_1x_j - 3(n_2-3)x_2^2 - \sum_{j=3}^{t} p(n_j-p)x_j^2 \\
		&\quad + \sum_{j=3}^{t} (n_j-p)(n_2-3)x_2x_j + \sum_{i=3}^{t-1} \sum_{j=i+1}^{t} (n_i-p)(n_j-p)x_ix_j \Bigr] \\
		&= 2\Bigl[ (n_2-3)(n_1x_1 - 3x_2)x_2 + \sum_{j=3}^{t} (n_j-p)(n_1x_1 - px_j)x_j \\
		&\quad + \sum_{j=3}^{t} (n_j-p)(n_2-3)x_2x_j + \sum_{i=3}^{t-1} \sum_{j=i+1}^{t} (n_i-p)(n_j-p)x_ix_j \Bigr] \\
		&> 0,
	\end{align*}
	since $x_1\ge x_j\ (2\le j\le t)$, $n_j\ge p\ge1\ (3\le j\le t)$ and $3\le n_2\le n_1 < n-s-(t-2)p-3$.
	
	Thus, the result follows.  \(\square\)

	\noindent\begin{lemma}\label{le:2.5.}
		Let \( n\geq 8\delta - 5k + 20 \), where \( k\geq 1 \) and \( \delta \geq k + 2 \). Then
		$$\rho(K_{k+1} \vee (K_{n-\delta-4} \cup K_{3} \cup K_{\delta-k}))
		<
		\rho(K_{\delta} \vee (K_{n-2\delta+k-3} \cup K_{3} \cup (\delta-k)K_1)).$$
	\end{lemma}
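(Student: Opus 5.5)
The plan is to compare the two spectral radii through the characteristic polynomials of quotient matrices of equitable partitions. Write $G_1=K_{k+1}\vee(K_{n-\delta-4}\cup K_3\cup K_{\delta-k})$ and $G_2=K_\delta\vee(K_{n-2\delta+k-3}\cup K_3\cup(\delta-k)K_1)$.

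\textbf{Quotient matrices.} For $G_1$ I will use the partition $V(K_{k+1}),V(K_{n-\delta-4}),V(K_3),V(K_{\delta-k})$. Its quotient matrix is
\[
B_1=\begin{pmatrix} k & n-\delta-4 & 3 & \delta-k\\ k+1 & n-\delta-5 & 0 & 0\\ k+1 & 0 & 2 & 0\\ k+1&0&0&\delta-k-1\end{pmatrix}.
\]
For $G_2$ I will use the partition $V(K_\delta),V(K_{n-2\delta+k-3}),V(K_3),V((\delta-k)K_1)$, with quotient matrix
\[
B_2=\begin{pmatrix} \delta-1 & n-2\delta+k-3 & 3 & \delta-k\\ \delta & n-2\delta+k-4 & 0 & 0\\ \delta & 0 & 2 & 0\\ \delta&0&0&0\end{pmatrix}.
\]
Both partitions are equitable, so $\rho(G_i)$ is the largest root of $f_i(x)=\det(xI-B_i)$. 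Expanding along the first row, each determinant is an explicit quartic in $x$ with coefficients polynomial in $n,\delta,k$.

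\textbf{Locating the roots.} Both graphs contain a clique on $n-\delta+k-3$ vertices: $K_{k+1}\vee K_{n-\delta-4}$ in $G_1$ and $K_\delta\vee K_{n-2\delta+k-3}$ in $G_2$. Hence both spectral radii exceed $n-\delta+k-4$. I will set $\theta_2=\rho(G_2)$ and aim to prove $f_1(x)>0$ for all $x\ge \theta_2$, or at least $f_1(\theta_2)>0$ together with the fact that $\theta_2$ exceeds every other real root of $f_1$. Since $f_1$ is monic, this forces $\rho(G_1)<\theta_2$.

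\textbf{Reduction to a difference.} Because $f_2(\theta_2)=0$, it suffices to show $g(x):=f_1(x)-f_2(x)>0$ on $[n-\delta+k-4,\infty)$, so that $f_1(\theta_2)=g(\theta_2)>0$. I expect the $x^4$ terms to cancel and the $x^3$ coefficients to nearly cancel, since both traces equal $n-\delta+k-4$ plus lower-order terms. So $g$ should be a polynomial of degree at most $3$ (likely $2$ after cancellation), with a positive leading coefficient of order $(\delta-k-1)$ times something linear in $n$; the factor $\delta-k-1>0$ is where the hypothesis $\delta\ge k+2$ enters. I will then substitute $x=n-\delta+k-4+y$ with $y\ge 0$, expand, and check that every coefficient in $y$ is positive under $n\ge 8\delta-5k+20$. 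To exclude a larger root of $f_1$ beyond $\theta_2$, I will use interlacing: the remaining eigenvalues of $B_1$ are at most about $\max\{2,\delta-k-1\}+O(1)$, which is far below $n-\delta+k-4$ by the order assumption.

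\textbf{Main obstacle.} The hard step is the sign analysis of $g$ at $x\ge n-\delta+k-4$. The polynomial is messy in three parameters, and the constant $8\delta-5k+20$ must be exactly what makes the lowest-order coefficient nonnegative. If the direct expansion fails, I will fall back on a Rayleigh quotient argument in the style of Lemma~\ref{le:2.4.}. Starting from the Perron vector of $G_1$, I will move $\delta-k-1$ vertices of $K_{\delta-k}$ into the join part, turning them into dominating vertices, and detach the last vertex of $K_{\delta-k}$ and the rest as isolated vertices. I will then verify that the gained edge weight (edges to the big clique) dominates the lost weight (edges inside $K_{\delta-k}$). This uses the standard estimate that the Perron entries on the big clique are much larger than those on $K_{\delta-k}$, which holds once $n$ is large compared with $\delta$.
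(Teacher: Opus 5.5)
\textbf{Main route.} Your main route fails at the step you yourself single out. The claimed inequality $g=f_1-f_2>0$ on $[n-\delta+k-4,\infty)$ is false.

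The $x^3$ terms do not cancel. Indeed $\mathrm{tr}\,B_1-\mathrm{tr}\,B_2=(n-4)-(n-\delta+k-3)=\delta-k-1\ge 1$, so $g(x)=-(\delta-k-1)x^3+\cdots$ tends to $-\infty$. Put $L=n-\delta+k-4$ and $p=\delta-k$, and expand the two arrowhead determinants:
\[
g(L+y)=-(p-1)\,y\,(L+1+y)(L+y-2)-(k+1)(y+k+1)\bigl[(p+3)(L+y)-5p+3\bigr]+\delta(y+\delta)\bigl[(p+3)(L+y)-2p\bigr].
\]
Every coefficient in $y$ except the constant one is negative; the linear one is roughly $-(p-1)L^2$. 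For instance, $k=1$, $\delta=3$, $n=39$ gives $g(33)=817$ but $g(34)=-106$. So the coefficient check cannot succeed.

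The value $f_1(\theta_2)=g(\theta_2)$ is in fact positive, but only because $\theta_2$ lies very close to $L$. Your plan supplies no upper bound on $\theta_2$ that places it below the zero of $g$.

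Within your framework, the natural repair is to divide out the diagonal factors. Write $f_i(x)=\prod_j(x-d^{(i)}_j)\,\phi_i(x)$, where $d^{(i)}_j$ are the last three diagonal entries of $B_i$. Then
\[
\phi_i(L+y)=\frac{y(L+1+y)}{y+s_i}-s_i\Bigl(\frac{3}{L+y-2}+\frac{p}{L+y-e_i}\Bigr),\qquad (s_1,e_1)=(k+1,p-1),\quad (s_2,e_2)=(\delta,0).
\]
One gets $\phi_1>\phi_2$ for $y>0$: the first terms compare since $k+1<\delta$, and the subtracted terms differ by $\frac{3(p-1)}{L+y-2}+\frac{p(p-1)(L+y-\delta)}{(L+y)(L+y-p+1)}>0$. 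Each $\phi_i$ is increasing on $(L,\infty)$ with unique zero $\rho(G_i)$ there, so $\rho(G_1)<\rho(G_2)$ follows.

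\textbf{Fallback.} Your fallback has the roles of the vertex classes reversed. Let $x_2,x_3,x_4$ be the Perron entries of $G_1$ on $K_{n-\delta-4},K_3,K_{\delta-k}$.

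If the $\delta-k-1$ new dominating vertices come from $K_{\delta-k}$, no edge inside $K_{\delta-k}$ needs deleting, and you only reach $K_\delta\vee(K_{n-\delta-4}\cup K_3\cup K_1)$. To obtain $K_{n-2\delta+k-3}\cup(\delta-k)K_1$ you must also cut $\delta-k-1$ vertices out of $K_{n-\delta-4}$. That loses $(\delta-k-1)(n-2\delta+k-3)+\binom{\delta-k-1}{2}$ edges of weight $x_2^2$. The gain is only $(\delta-k-1)(n-\delta-4)$ edges of weight $x_2x_4$ and $3(\delta-k-1)$ edges of weight $x_3x_4$. Since $x_4/x_2=\frac{\rho-(n-\delta-5)}{\rho-(\delta-k-1)}$ is only slightly above $\frac{k+1}{n-2\delta+2k-3}$, this Rayleigh difference is negative.

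The paper does the opposite. It makes $\delta-k-1$ vertices of the big clique $K_{n-\delta-4}$ dominating, which only requires adding their edges to $K_3$ and $K_{\delta-k}$. It then deletes the $\binom{\delta-k}{2}$ edges inside $K_{\delta-k}$. The difference becomes $2\bigl[(\delta-k)(\delta-k-1)x_2x_4+3(\delta-k-1)x_2x_3-\tfrac12(\delta-k)(\delta-k-1)x_4^2\bigr]$. This is positive as soon as $x_2>x_4$, which follows from $(\rho-\delta+k+1)(x_2-x_4)=(n-2\delta+k-4)x_2$ and $\rho>\delta-k-1$. No ``much larger'' estimate is needed.
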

	\noindent \textbf{Proof.}
	
	Let \( G = K_{k+1} \vee (K_{n-\delta-4} \cup K_{3} \cup K_{\delta-k}) \). Then the vertex set of \( G \) can be partitioned as \( V(G) = V(K_{k+1}) \cup V(K_{n-\delta-4}) \cup V(K_{3}) \cup V(K_{\delta -k})\), where \( V(K_{k+1}) = \{v_1, v_2, \dots, v_{k+1}\} \), \( V(K_{n-\delta-4}) = \{u_1, u_2, \dots, u_{n-\delta-4}\} \), \( V(K_{3}) = \{z_1, z_2, z_3\} \) and \( V(K_{\delta -k}) = \{w_1, w_2, \dots, w_{\delta -k}\} \). Suppose that
	\[
	E_1 = \{u_i w_j \mid 1\leq i \le \delta-k-1,\ 1 \le j \le \delta-k\} \cup \{u_i z_j \mid 1\leq i \le \delta-k-1,\ 1 \le j \le 3\}
	\]
	and
	\[
	E_2 = \{w_i w_j \mid 1 \le i \le \delta-k-1,\ i+1 \le j \le \delta-k\}.
	\]
	Let \( G' = G + E_1 - E_2 \). Clearly, \( G' \cong K_{\delta} \vee (K_{n-2\delta+k-3} \cup K_{3} \cup (\delta-k)K_1) \).
	
	Let \( \mathbf{x} \) be the Perron vector of \( A(G) \) corresponding to $\rho(G)$. By symmetry, \( \mathbf{x} \) takes the same value (say \( x_1 \), \( x_2 \) , \( x_3 \) and \( x_4 \)) on the vertices of \( V(K_{k+1}),  V(K_{n-\delta-4}), V(K_{3}) ~and ~V(K_{\delta -k}) \), respectively. Then, by \( A(G)\mathbf{x} = \rho(G)\mathbf{x} \), we have
	\[
	\begin{cases}
		\rho x_2 = (k+1) x_1 + (n-\delta-5) x_2, \\
		\rho x_3 = (k+1) x_1 + 2 x_3,\\
		\rho x_4 = (k+1) x_1 + (\delta-k-1) x_4,
	\end{cases}
	\]
	which leads to
	\[
	(\rho - \delta + k + 1)(x_2 - x_4) = (n - 2\delta + k - 4)x_2 > 0,
	\]
	because \( n \ge 8\delta - 5k + 20 \), \( k \ge 1 \) and \( \delta \ge k+2 \). Note that \( K_{n-\delta+k-3} \) is a proper subgraph of \( G \), then \( \rho(G) > n - \delta + k - 4 > \delta - k - 1 \). Therefore,
	\[
	x_2 > x_4,
	\]
	and hence
	\begin{align*}
		\rho(G') - \rho(G) &\ge\mathbf{x}^T (A(G') - A(G)) \mathbf{x}\\
		&= 2\left(
		\sum_{j=1}^{\delta-k} \sum_{i=1}^{\delta-k-1} x_{u_i} x_{w_j} + \sum_{j=1}^{3} \sum_{i=1}^{\delta-k-1} x_{u_i} x_{z_j}
		- \sum_{i=1}^{\delta-k-1} \sum_{j=i+1}^{\delta-k} x_{w_i} x_{w_j}
		\right) \\
		&= 2[ (\delta-k)(\delta-k-1)x_2x_4 + 3(\delta-k-1)x_2x_3 - \frac{(\delta-k)(\delta-k-1)}{2}x_4^2 ] \\
		&= 2[x_4 (\delta-k)(\delta-k-1) ( x_2 - \frac{x_4}{2} ) + 3(\delta-k-1)x_2x_3] \\
		&> 0.
	\end{align*}
	
	The result follows. \(\square\)
	
	Above, we compare the spectral radius of the two kinds of extremal graphs. Now we compare their edge numbers.
	
	\noindent\begin{lemma}\label{le:2.6.}
		Let $n=\sum_{i=1}^{t} n_i + s$ with $s\geq 1$. If $n_1\geq n_2\geq \cdots \geq n_t\geq p\geq 1$, $n_2 \geq 3$ and $n_1 < n-s-(t-2)p-3$, then
		$$|E(K_s\vee(K_{n_1}\cup K_{n_2}\cup \cdots \cup K_{n_t}))|<
		|E(K_s\vee (K_{n-s-(t-2)p-3} \cup K_3 \cup (t-2)K_p))|.$$	
	\end{lemma}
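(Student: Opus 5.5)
The plan is to follow the proof of Lemma \ref{le:2.4.} step by step, but to count edges instead of evaluating the Rayleigh quotient. Write $G=K_s\vee(K_{n_1}\cup K_{n_2}\cup\cdots\cup K_{n_t})$ and $G'=K_s\vee (K_{n-s-(t-2)p-3}\cup K_3\cup (t-2)K_p)$. Take the edge sets $E_1,E_2,E_3$ exactly as they are defined in that proof. We already know that $G+E_2+E_3-E_1\cong G'$. Moreover $E_2\cup E_3$ consists of non-edges of $G$ and $E_1\subseteq E(G)$, and these sets are pairwise disjoint. Hence
$$|E(G')|-|E(G)|=|E_2|+|E_3|-|E_1|,$$
and it suffices to show that this quantity is positive.

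Next we read off the cardinalities from the definitions:
\begin{align*}
|E_1|&=3(n_2-3)+\sum_{j=3}^{t}p(n_j-p),\\
|E_2|&=n_1(n_2-3)+\sum_{j=3}^{t}n_1(n_j-p),\\
|E_3|&=\sum_{j=3}^{t}(n_2-3)(n_j-p)+\sum_{3\le i<j\le t}(n_i-p)(n_j-p)\ \ge 0.
\end{align*}
Grouping the terms gives
$$|E_2|+|E_3|-|E_1|=(n_1-3)(n_2-3)+\sum_{j=3}^{t}(n_1-p)(n_j-p)+|E_3|.$$
This is the same expression as in the spectral proof, with every Perron entry replaced by $1$.

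Every term on the right-hand side is nonnegative, because $n_1\ge n_2\ge 3$ and $n_1\ge n_j\ge p$. The only real point is strict positivity, which is where the hypothesis $n_1<n-s-(t-2)p-3$ enters. Suppose the expression were zero. Then $(n_1-3)(n_2-3)=0$ and $(n_1-p)(n_j-p)=0$ for every $j\ge 3$.

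We first show that this forces $n_2=3$. If instead $n_2>3$, then $n_1\ge n_2>3$, so $(n_1-3)(n_2-3)>0$, a contradiction.

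Given $n_2=3$, we then show that $n_j=p$ for all $j\ge 3$. If some $n_j>p$, then $n_1\ge n_j>p$, so $(n_1-p)(n_j-p)>0$, again a contradiction.

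These two facts give $n_1=n-s-3-(t-2)p$, which contradicts the assumed strict inequality. Hence the difference is at least $1$, and $|E(G)|<|E(G')|$.

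I expect no serious obstacle here. The only care needed is to confirm that $E_1$, $E_2$ and $E_3$ are pairwise disjoint, and that $E_2\cup E_3$ lies outside $E(G)$ while $E_1$ lies inside it. Both follow because the edges of $E_2$ and $E_3$ run between different cliques $K_{n_i}$, whereas the edges of $E_1$ lie within a single clique.
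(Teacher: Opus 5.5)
Your proposal is correct and matches the paper's proof: it uses the same edge sets $E_1,E_2,E_3$ from Lemma~\ref{le:2.4.}, the same identity $|E(G')|-|E(G)|=|E_2|+|E_3|-|E_1|$, and the same grouped expression $(n_1-3)(n_2-3)+\sum_{j\ge 3}(n_1-p)(n_j-p)+|E_3|$. Your extra step is also sound: the hypothesis $n_1<n-s-(t-2)p-3$ forces $n_2>3$ or $n_j>p$ for some $j\ge 3$, so at least one term is strictly positive. This fills in the strict inequality, which the paper only asserts.
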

	\noindent\textbf{Proof.}
	
	Let $G=K_s\vee(K_{n_1}\cup K_{n_2}\cup \cdots \cup K_{n_t})$ and $G'= K_s\vee (K_{n-s-(t-2)p-3} \cup K_3 \cup (t-2)K_p)$. The vertex set of $G$ can be partitioned as $V(G) = V(K_s) \cup V(K_{n_1}) \cup V(K_{n_2}) \cup \cdots \cup V(K_{n_t})$, where $V(K_s)= \{v_1, v_2, \ldots, v_s\}$ and $ V(K_{n_i}) =\{u_{i1}, u_{i2}, \ldots, u_{in_i}\} $ for $ 1\leq i\leq t$. Let
	\begin{align*}
		E_1=&\{u_{ik}u_{il} \mid 3\leq i\leq t,\ 1\leq k\leq n_i - p,\ n_i - p + 1\leq l\leq n_i\}
		\\&\cup \{u_{2k}u_{2l} \mid  1\leq k\leq n_2 - 3,\ n_2 - 2\leq l\leq n_2\}, \\[2pt]
		E_2=&\{u_{1k}u_{ij} \mid 1\leq k\leq n_1,\ 3\leq i\leq t,\ 1\leq j\leq n_i - p\}
		\\&\cup \{u_{1k}u_{2j} \mid 1\leq k\leq n_1,
		\ 1\leq j\leq n_2 - 3\}, \\[2pt]
		E_3=&\{u_{ij}u_{kl} \mid 3\leq i\leq t-1,\ i+1\leq k\leq t;\ 1\leq j\leq n_i - p,\ 1\leq l\leq n_k - p\}
		\\& \cup \{u_{2j}u_{kl} \mid \ 3\leq k\leq t,\ 1\leq j\leq n_2 - 3,\ 1\leq l\leq n_k - p\}.
	\end{align*}
	It is straightforward to check that $G + E_2+ E_3 - E_1 \cong G'$.
	Then, we have
	\begin{align*}
		|E(G')| - |E(G)|
		&=  n_1(n_2-3) + \sum_{j=3}^{t} n_1(n_j-p) - 3(n_2-3) - \sum_{j=3}^{t} p(n_j-p) \\
		&\quad + \sum_{j=3}^{t} (n_j-p)(n_2-3) + \sum_{i=3}^{t-1} \sum_{j=i+1}^{t} (n_i-p)(n_j-p)  \\
		&= (n_2-3)(n_1 - 3) + \sum_{j=3}^{t} (n_j-p)(n_1 - p) \\
		&\quad + \sum_{j=3}^{t} (n_j-p)(n_2-3) + \sum_{i=3}^{t-1} \sum_{j=i+1}^{t} (n_i-p)(n_j-p)  \\
		&> 0.
	\end{align*}
	due to $n_j \geq p\geq 1$, for $3\leq j\leq t$ and $3\leq n_2\leq n_1 < n-s-(t-2)p-3$.
	
	Thus, the result follows.  \(\square\)

	\noindent\begin{lemma}\label{le:2.7.}
		Let \( n\geq 6\delta - 5k + 8 \), where \( k\geq 1 \) and \( \delta \geq k + 2 \). Then
		$$|E(K_{k+1} \vee (K_{n-\delta-4} \cup K_{3} \cup K_{\delta-k}))|
		<
		|E(K_{\delta} \vee (K_{n-2\delta+k-3} \cup K_{3} \cup (\delta-k)K_1))|.$$
	\end{lemma}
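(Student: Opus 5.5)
I would prove Lemma \ref{le:2.7.} with the same edge-switching construction used in the proof of Lemma \ref{le:2.5.}, but now simply counting edges instead of applying the Rayleigh quotient. Let \(G = K_{k+1} \vee (K_{n-\delta-4} \cup K_{3} \cup K_{\delta-k})\), and label the vertices as in the proof of Lemma \ref{le:2.5.}: \(v_1,\dots,v_{k+1}\), \(u_1,\dots,u_{n-\delta-4}\), \(z_1,z_2,z_3\) and \(w_1,\dots,w_{\delta-k}\). Define
\[
E_1=\{u_iw_j \mid 1\le i\le \delta-k-1,\ 1\le j\le \delta-k\}\cup\{u_iz_j \mid 1\le i\le \delta-k-1,\ 1\le j\le 3\}
\]
and \(E_2=\{w_iw_j \mid 1\le i<j\le \delta-k\}\). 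Set \(G'=G+E_1-E_2\).

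The first step is to confirm that \(G'\cong K_{\delta} \vee (K_{n-2\delta+k-3} \cup K_{3} \cup (\delta-k)K_1)\). The vertices \(u_1,\dots,u_{\delta-k-1}\) are already adjacent to every other vertex of \(G\) except the \(z_j\) and \(w_j\). After adding \(E_1\) they become universal vertices. Together with \(v_1,\dots,v_{k+1}\) they form a clique of order \((k+1)+(\delta-k-1)=\delta\), and this clique is joined to everything else. The remaining vertices \(u_{\delta-k},\dots,u_{n-\delta-4}\) form a clique of order \(n-\delta-4-(\delta-k-1)=n-2\delta+k-3\). The \(z_j\) still form a \(K_3\), and after deleting \(E_2\) the \(w_j\) are \(\delta-k\) isolated vertices outside the join. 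For this to make sense we need \(n-\delta-4\ge \delta-k-1\), that is, \(n\ge 2\delta-k+3\). This follows from \(n\ge 6\delta-5k+8\) together with \(\delta\ge k+2\), since \(6\delta-5k+8-(2\delta-k+3)=4(\delta-k)+5>0\).

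The second step is the count itself:
\[
|E(G')|-|E(G)|=|E_1|-|E_2|=(\delta-k-1)(\delta-k)+3(\delta-k-1)-\frac{(\delta-k)(\delta-k-1)}{2}=(\delta-k-1)\Bigl(\frac{\delta-k}{2}+3\Bigr).
\]
This is positive because \(\delta\ge k+2\). Hence the strict inequality holds.

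I do not expect any real obstacle here. The only point needing care is checking that the isomorphism is correct, in particular that \(E_1\) and \(E_2\) are disjoint from, respectively contained in, \(E(G)\) as claimed. The hypothesis \(n\ge 6\delta-5k+8\) is much stronger than what this argument needs; it is presumably stated to match its later use in the proof of Theorem \ref{th:1.3.}.
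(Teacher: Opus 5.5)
Your proposal is correct and is essentially the paper's own proof: it uses the same edge sets $E_1$ and $E_2$ as in Lemma 2.5, the same isomorphism $G+E_1-E_2\cong K_{\delta}\vee(K_{n-2\delta+k-3}\cup K_3\cup(\delta-k)K_1)$, and the same count $|E_1|-|E_2|=\frac{(\delta-k)(\delta-k-1)}{2}+3(\delta-k-1)>0$. You are also right that the argument itself only needs $n\ge 2\delta-k+3$ and $\delta\ge k+2$, which the paper leaves implicit.
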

	\noindent \textbf{Proof.}
	
	Let \( G = K_{k+1} \vee (K_{n-\delta-4} \cup K_{3} \cup K_{\delta-k}) \). Then the vertex set of \( G \) can be partitioned as \( V(G) = V(K_{k+1}) \cup V(K_{n-\delta-4}) \cup V(K_{3}) \cup V(K_{\delta -k})\), where \( V(K_{k+1}) = \{v_1, v_2, \dots, v_{k+1}\} \), \( V(K_{n-\delta-4}) = \{u_1, u_2, \dots, u_{n-\delta-4}\} \), \( V(K_{3}) = \{z_1, z_2, z_3\} \) and \( V(K_{\delta -k}) = \{w_1, w_2, \dots, w_{\delta -k}\} \). Suppose that
	\[
	E_1 = \{u_i w_j \mid 1\leq i \le \delta-k-1,\ 1 \le j \le \delta-k\} \cup \{u_i z_j \mid 1\leq i \le \delta-k-1,\ 1 \le j \le 3\}
	\]
	and
	\[
	E_2 = \{w_i w_j \mid 1 \le i \le \delta-k-1,\ i+1 \le j \le \delta-k\}.
	\]
	Let \( G' = G + E_1 - E_2 \). Clearly, \( G' \cong K_{\delta} \vee (K_{n-2\delta+k-3} \cup K_{3} \cup (\delta-k)K_1) \).
	Then, we have
	\begin{align*}
		|E(G')| - |E(G)|
		&= (\delta-k)(\delta-k-1) + 3(\delta-k-1) - \frac{(\delta-k)(\delta-k-1)}{2}  \\
		&=  \frac{ (\delta-k)(\delta-k-1)}{2} + 3(\delta-k-1) \\
		&> 0.
	\end{align*}
	
	The result follows. \(\square\)

	\noindent\begin{lemma}\label{le:2.8.}
		
		Let \( k \ge 1 \) and \( \delta \ge k+1 \) be integers, and let \( n - k \) be a positive even integer. Then the graph \( K_\delta \vee ( K_{n-2\delta+k-3} \cup K_3 \cup (\delta - k)K_1 ) \) is not \( k \)-factor-critical.

	\end{lemma}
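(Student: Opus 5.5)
We plan to use Lemma~\ref{le:2.1.} directly: it suffices to exhibit one vertex set $S$ with $|S|\ge k$ and $o(G-S)>|S|-k$, where $G=K_\delta \vee ( K_{n-2\delta+k-3} \cup K_3 \cup (\delta - k)K_1 )$. The natural choice is $S=V(K_\delta)$, so that $|S|=\delta\ge k+1\ge k$ and $|S|-k=\delta-k$.

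Removing $S$ leaves $G-S=K_{n-2\delta+k-3}\cup K_3\cup(\delta-k)K_1$. Its odd components include the $\delta-k$ isolated vertices and the triangle $K_3$, so there are at least $\delta-k+1$ of them. Whether $K_{n-2\delta+k-3}$ is also odd does not matter. In fact, since $n\equiv k\pmod 2$, the integer $n-2\delta+k-3\equiv 2k-3$ is odd, so it is odd whenever it is nonempty. This parity observation is only a side remark, and the lower bound $\delta-k+1$ is already enough. Hence $o(G-S)\ge \delta-k+1>|S|-k$, and by Lemma~\ref{le:2.1.} the graph is not $k$-factor-critical.

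The only delicate point is that the graph must be well defined, which requires $n-2\delta+k-3\ge 0$. We read the statement as implicitly assuming this, with $K_0$ understood as empty if equality holds. The argument does not use the size of this clique, so no real obstacle arises. It is also worth noting that the same $S$ gives $i(G-S)=\delta-k=|S|-k$, so this set does not violate the condition of Lemma~\ref{le:2.2.}. This is consistent with the graph being a fractional $k$-factor-critical extremal example, although that property is not needed here.
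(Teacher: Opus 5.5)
Your proof is correct and follows the paper's argument. Both take $S=V(K_\delta)$ and apply Lemma~\ref{le:2.1.}; the paper counts all $\delta-k+2$ odd components, including the large clique, whereas you use the bound $\delta-k+1$, which already suffices. Your $K_0$ caveat never arises: by your own parity remark, $n-2\delta+k-3$ is odd, so it is at least $1$ whenever the graph is defined.
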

	\noindent \textbf{Proof.}
	
	Let \( G = K_\delta \vee ( K_{n-2\delta+k-3} \cup K_3 \cup (\delta - k)K_1 ) \). Suppose to the contrary that \( G \) is \( k \)-factor-critical. By Lemma 2.1, for every subset \( S \subseteq V(G) \), we have \( o(G - S) \le |S| - k \). However, if we take \( S = V(K_\delta) \), then \( o(G - S) = \delta - k + 2 > \delta - k = |S| - k \), a contradiction.

	\section{Proof of Theorem \ref{th:1.3.}}
	\hspace{1.3em}
	
	In this section, we will give the proof of Theorem 1.3.\\
	
	\noindent\textbf{Proof of Theorem \ref{th:1.3.}.}
	
	Let \( G \) be a $(k+1)$-connected graph of order \( n \equiv k \pmod{2} \), which is fractional \( k \)-factor-critical. Since $G$ is $(k+1)$-connected, its minimum degree satisfies $\delta\geq k+1$.  Suppose to the contrary that \( G \) is not \( k \)-factor-critical. According to Lemma 2.1, there exists some subset \( S \subseteq V(G) \) with $|S|\geq k$ such that \( o(G - S) \ge |S| - k + 1 \). Since \( n \equiv k \equiv s+ o(G - S) \pmod{2} \), it follows that $s-k\equiv o(G - S) \pmod{2}$. Hence, we have \( o(G - S) \ge |S| - k + 2 \).
	
	Let \( |S| = s \) and \( o(G - S) = r \). Meanwhile, let \( P_1, P_2, \dots, P_r \) be the odd components and \( Q_1, Q_2, \dots, Q_t \) be the even components of \( G - S \) (even components may not exist), where \( |P_i| = p_i \) for \( 1 \le i \le r \) and \( |Q_j| = q_j \) for \( 1 \le j \le t \). Without loss of generality, we assume that \( p_1 \le p_2 \le \cdots \le p_r \), where \( r \ge s - k + 2 \).
	
	We claim that \( p_{s-k+1} \ge 3 \). Otherwise, \( p_1 = p_2 = \cdots = p_{s-k+1} = 1 \). Then \( i(G - S) = s - k + 1 \). Note that \( G \) is fractional \( k \)-factor-critical. By Lemma 2.2, we have \( i(G - S) \le s - k \) for any \( S \subseteq V(G) \) with \( s \ge k \), a contradiction. This implies that \( p_i \ge 3 \) for \( s - k + 1 \le i \le r \) and \( p_j \ge 1 \) for \( 1 \le j \le s - k \). Let \( n_1 = \sum_{i=s-k+2}^{r} p_i + \sum_{j=1}^{t} q_j \), \( n_2 = p_{s-k+1}, \dots, n_{s-k+2} = p_1 \). That is to say, \( n_i \ge 3 \) and \( n_j \ge 1 \) for \( 1 \le i \le 2 \) and \( 3 \le j \le s - k + 2 \), where \( n_1 \ge n_2 \ge \cdots \ge n_{s-k+2} \) are positive odd integers with \( \sum_{i=1}^{s-k+2} n_i = n - s \). Hence, we have \( n \ge 2s - k + 6 \). Then \( G \) is a spanning subgraph of \( G' = K_s \vee (K_{n_1} \cup K_{n_2} \cup \cdots \cup K_{n_{s-k+2}}) \). So we have
	\[
	|E(G)| \le |E(G')|,
	\]
	where equality holds if and only if \( G \cong G' \).
	If $s=k$, then $o(G-S)\geq s-k+2=2$, contradicting the $(k+1)$-connectivity of $G$. Thus, $s\geq k+1$. Next, we divide the proof into the following three possible cases according to the value of $s$.\\
	
	\textbf{Case 1.} $s\geq\delta+1$.
	
	Let \( G'' = K_s \vee (K_{n-2s+k-3} \cup K_{3} \cup (s-k)K_1) \). Note that \( s \ge \delta+1 \). Then Lemma 2.6 gives that
	\[
	|E(G')| \le |E(G'')|,
	\]
	where the equality holds if and only if \( (n_1, n_2, n_3, \dots, n_{s-k+2}) = (n-2s+k-3, 3, 1, \dots, 1) \).
	The vertex set of \( G'' \) can be partitioned as \( V(G'') = V(K_s) \cup V(K_{n-2s+k-3}) \cup V(K_{3}) \cup V((s-k)K_1)\), where \( V(K_s) = \{v_1, v_2, \dots, v_s\} \), \( V((s-k)K_1) = \{u_1, u_2, \dots, u_{s-k}\} \) , \( V(K_{n-2s+k-3}) = \{w_1, w_2, \dots, w_{n-2s+k-3}\} \) and \( V(K_3) = \{z_1, z_2, z_3\} \). Suppose that
	\[
	\begin{aligned}
		E_1 =& \{u_i w_j \mid \delta-k+1\leq i \le s-k,\ 1 \le j \le n-2s+k-3\}
		\\&\cup \{u_i u_j \mid \delta-k+1 \le i \le s-k-1,\ i+1 \le j \le s-k\},\\
		E_2 =& \{v_i u_j \mid \delta+1 \le i \le s,\ 1 \le j \le \delta-k\}\cup \{v_i z_j \mid \delta+1 \le i \le s,\ 1 \le j \le 3\}.
	\end{aligned}
	\]
	Let \( G_{\delta} = G'' + E_1 - E_2 \). Clearly, \( G_{\delta} \cong  K_\delta \vee (K_{n-2\delta+k-3} \cup K_{3} \cup (\delta-k)K_1)\). Since $n\geq2s - k + 6$ and $n\geq6\delta-5k+8$, we have
	\begin{align*}
		|E(G_{\delta})| - |E(G'')|
		&= (s-\delta)(n-2s+k-3) + \frac{(s-\delta)(s-\delta-1)}{2} - (s-\delta)(\delta-k)-3(s-\delta) \\
		&= \dfrac{1}{2}(s-\delta)(2n-3s-3\delta+4k-13) \\
		&\geq\dfrac{1}{2}(s-\delta)(2n-\frac{3(n+k-6)}{2}-3\delta+4k-13)\\
		&=\dfrac{1}{4}(s-\delta)(n-6\delta+5k-8)\\
		&>0.
	\end{align*}
	Then, we have $|E(G_{\delta})| > |E(G'')|$. Together with $|E(G)| \le |E(G')| \le |E(G'')|$, we obtain $|E(G)| < |E(G_{\delta})|$, a contradiction to the condition.

	\textbf{Case 2.} $s=\delta$.
	
	By Lemma 2.6, we have \(|E(G')| \le |E(K_\delta \vee (K_{n-2\delta+k-3} \cup K_{3} \cup (\delta-k)K_1))|\), with equality if and only if \(G' \cong K_\delta \vee (K_{n-2\delta+k-3} \cup K_{3} \cup (\delta-k)K_1)\). Note that \(|E(G)| \le |E(G')|\). Hence, we have \(|E(G)| \le |E(K_\delta \vee (K_{n-2\delta+k-3} \cup K_{3} \cup (\delta-k)K_1))|\), with equality if and only if \(G \cong K_\delta \vee (K_{n-2\delta+k-3} \cup K_{3} \cup (\delta-k)K_1)\). By Lemma 2.8, we have \(K_\delta \vee (K_{n-2\delta+k-3} \cup K_{3} \cup (\delta-k)K_1)\) is not \(k\)-factor-critical. Thus, we obtain a contradiction.

	\textbf{Case 3.} $k+1\leq s\leq\delta-1$.
	
	Let \( G^* = K_s \vee (K_{n-s-(s-k)(\delta+1-s)-3} \cup K_{3} \cup (s-k)K_{\delta+1-s}) \). Recall that \( G \) is a spanning subgraph of \( G' = K_s \vee (K_{n_1} \cup K_{n_2} \cup \cdots \cup K_{n_{s-k+2}}) \), where \( n_i \ge 3 \), \( n_j \ge 1 \) for \( 1 \le i \le 2 \), \( 3 \le j \le s - k + 2 \), \( n_1 \ge n_2 \ge \cdots \ge n_{s-k+2} \) and \( \sum_{i=1}^{s-k+2} n_i = n - s \). Clearly, \( n_{s-k+2} \ge \delta + 1 - s \) because the minimum degree of \( G' \) is at least \( \delta \). By Lemma 2.6, we have
	\[
	|E(G')| \le |E(G^*)|,
	\]
	where the equality holds if and only if \( (n_1, n_2, n_3, \dots, n_s-k+2) = (n - s - (s - k)(\delta + 1 - s)-3, 3, \delta + 1 - s, \dots, \delta + 1 - s) \).
	
	If \( s = k+1 \), then \( G^* = K_{k+1} \vee (K_{n-\delta-4} \cup K_{3} \cup K_{\delta-k}) \). Combining Lemma 2.7, we get
	\[
	|E(G)| \le |E(G')| \le |E(G^*)| < |E(K_\delta \vee (K_{n-2\delta+k-3} \cup K_{3} \cup (\delta-k)K_1))|.
	\]
	Thus, we consider \( s \ge k+2 \) in the following. Let $G_{\delta}=K_\delta \vee (K_{n-2\delta+k-3} \cup K_{3} \cup (\delta-k)K_1)$. Then we have
	\[
	\begin{aligned}
		|E(G^*)| &= \binom{n-(s-k)(\delta+1-s)-3}{2}
		+ \binom{\delta+1-s}{2}(s-k)+ s(\delta+1-s)(s-k) \\
		&\quad + 3(s+1), \\[4pt]
		|E(G_{\delta})| &= \binom{n-\delta+k-3}{2}+\delta(\delta-k)+3\delta+3.
	\end{aligned}
	\]
	By a direct calculation, we have
	\[
	\begin{aligned}
		|E(G_{\delta})| - |E(G^*)| &= \frac{1}{2}(\delta-s)[s^3+(-2k-\delta-3)s^2+(2n+k^2+2k\delta+5k-\delta-5)s\\
		&\quad-2(k+1)n +\delta(-k^2 + k + 3)-2k^2+4k+13 ].
	\end{aligned}
	\]
	Let \(f(x)=x^3+(-2k-\delta-3)x^2+(2n+k^2+2k\delta+5k-\delta-5)x-2(k+1)n +\delta(-k^2 + k + 3)-2k^2+4k+13\) be a real function in \(x\) with \(x\in [k+2,\delta-1]\). Then the derivative function of \(f(x)\) is
	\[
	f'(x)=3x^2+2(-2k-\delta-3)x+2n+k^2+2k\delta+5k-\delta-5.
	\]
	Note that the symmetry axis of parabola \(f'(x)\) is \(\frac{2k+\delta+3}{3}\). Then we have
	\[
	f'(x)\ge f'\!\left(\frac{2k+\delta+3}{3}\right)=[2n-\frac{\delta^2}{3}+\delta(\frac{2}{3}k-3)-\frac{k^2}{3}+k-8]\ge 0,
	\]
	which implies \(f(x)\) is increasing in the interval \([k+2,\delta-1]\). So we obtain
	\[
	\begin{aligned}
		f(s) &\ge f(k+2)=2n+k-3\delta-1 \\
		&\ge 2(6\delta-5k+8)+k-3\delta-1 \\
		&\ge 9\delta-9k+15 \\
		&>0.
	\end{aligned}
	\]
	Thus, \(|E(G_{\delta})| > |E(G^*)|\). Together with \(|E(G)| \le |E(G')| \le |E(G^*)|\), we obtain \(|E(G)| < |E(G_{\delta})|\), a contradiction to the condition.
	
	This completes the proof. \(\square\)

	\section{Proof of Theorem \ref{th:1.4.}}
	\hspace{1.3em}
	
	In this section, we will give the proof of Theorem 1.4.\\
	
	\noindent\textbf{Proof of Theorem \ref{th:1.4.}.}
	
	Let \( G \) be a $(k+1)$-connected graph of order \( n \equiv k \pmod{2} \), which is fractional \( k \)-factor-critical. Since $G$ is $(k+1)$-connected, its minimum degree satisfies $\delta\geq k+1$.  Suppose to the contrary that \( G \) is not \( k \)-factor-critical. According to Lemma 2.1, there exists some subset \( S \subseteq V(G) \) with $|S|\geq k$ such that \( o(G - S) \ge |S| - k + 1 \). Since \( n \equiv k \equiv s+ o(G - S) \pmod{2} \), it follows that $s-k\equiv o(G - S) \pmod{2}$. Hence, we have \( o(G - S) \ge |S| - k + 2 \).
	
	Let \( |S| = s \) and \( o(G - S) = r \). Meanwhile, let \( P_1, P_2, \dots, P_r \) be the odd components and \( Q_1, Q_2, \dots, Q_t \) be the even components of \( G - S \) (even components may not exist), where \( |P_i| = p_i \) for \( 1 \le i \le r \) and \( |Q_j| = q_j \) for \( 1 \le j \le t \). Without loss of generality, we assume that \( p_1 \le p_2 \le \cdots \le p_r \), where \( r \ge s - k + 2 \).
	
	We claim that \( p_{s-k+1} \ge 3 \). Otherwise, \( p_1 = p_2 = \cdots = p_{s-k+1} = 1 \). Then \( i(G - S) = s - k + 1 \). Note that \( G \) is fractional \( k \)-factor-critical. By Lemma 2.2, we have \( i(G - S) \le s - k \) for any \( S \subseteq V(G) \) with \( s \ge k \), a contradiction. This implies that \( p_i \ge 3 \) for \( s - k + 1 \le i \le r \) and \( p_j \ge 1 \) for \( 1 \le j \le s - k \). Let \( n_1 = \sum_{i=s-k+2}^{r} p_i + \sum_{j=1}^{t} q_j \), \( n_2 = p_{s-k+1}, \dots, n_{s-k+2} = p_1 \). That is to say, \( n_i \ge 3 \) and \( n_j \ge 1 \) for \( 1 \le i \le 2 \) and \( 3 \le j \le s - k + 2 \), where \( n_1 \ge n_2 \ge \cdots \ge n_{s-k+2} \) are positive odd integers with \( \sum_{i=1}^{s-k+2} n_i = n - s \). Hence, we have \( n \ge 2s - k + 6 \). Then \( G \) is a spanning subgraph of \( G' = K_s \vee (K_{n_1} \cup K_{n_2} \cup \cdots \cup K_{n_{s-k+2}}) \). So we have
	\[
	\rho(G) \le \rho(G'), \tag{1}
	\]
	where equality holds if and only if \( G \cong G' \).
	If $s=k$, then $o(G-S)\geq s-k+2=2$, contradicting the $(k+1)$-connectivity of $G$. Thus, $s\geq k+1$. Next, we divide the proof into the following three possible cases according to the value of $s$.\\
	
	\textbf{Case 1.} $s\geq\delta+1$.
	
	Let \( G'' = K_s \vee (K_{n-2s+k-3} \cup K_{3} \cup (s-k)K_1) \). Note that \( s \ge \delta+1 \). Then Lemma 2.4 gives that
	\[
	\rho(G') \le \rho(G''), \tag{2}
	\]
	where the equality holds if and only if \( (n_1, n_2, n_3, \dots, n_{s-k+2}) = (n-2s+k-3, 3, 1, \dots, 1) \).
	The vertex set of \( G'' \) can be partitioned as \( V(G'') = V(K_s) \cup V(K_{n-2s+k-3}) \cup V(K_{3}) \cup V((s-k)K_1)\), where \( V(K_s) = \{v_1, v_2, \dots, v_s\} \), \( V((s-k)K_1) = \{u_1, u_2, \dots, u_{s-k}\} \) , \( V(K_{n-2s+k-3}) = \{w_1, w_2, \dots, w_{n-2s+k-3}\} \) and \( V(K_3) = \{z_1, z_2, z_3\} \). Suppose that
	\[
	\begin{aligned}
		E_1 =& \{u_i w_j \mid \delta-k+1\leq i \le s-k,\ 1 \le j \le n-2s+k-3\}
		\\&\cup \{u_i u_j \mid \delta-k+1 \le i \le s-k-1,\ i+1 \le j \le s-k\},\\
		E_2 =& \{v_i u_j \mid \delta+1 \le i \le s,\ 1 \le j \le \delta-k\}\cup \{v_i z_j \mid \delta+1 \le i \le s,\ 1 \le j \le 3\}.
	\end{aligned}
	\]
	Let \( G_{\delta} = G'' + E_1 - E_2 \). Clearly, \( G_{\delta} \cong  K_\delta \vee (K_{n-2\delta+k-3} \cup K_{3} \cup (\delta-k)K_1)\). Let \( \mathbf{x} \) be the Perron vector of \( A(G'') \), and let \( \rho = \rho(G'') \). By symmetry, \( \mathbf{x} \) takes the same value (say \( x_1, x_2, x_3 \) and \( x_4 \)) on the vertices of \( V(K_s) \), \( V((s-k)K_1) \), \( V(K_{n-2s+k-3}) \) and \( V(K_{3}) \), respectively. Then, by \( A(G'')\mathbf{x} = \rho\mathbf{x} \), we have
	\[
	\begin{cases}
		\rho x_2 = s x_1, \\
		\rho x_3 = s x_1 + (n-2s+k-4)x_3,\\
		\rho x_4 = s x_1 + 2 x_4.
	\end{cases}
	\]
	Observe that \( n \ge 2s-k+6 \). Then \( x_3 > x_4 > x_2 \) and
	\[
	x_2 = \frac{s x_1}{\rho},~ x_3 = \frac{s x_1}{\rho - (n-2s+k-4)}, ~x_4 = \frac{s x_1}{\rho - 2}. \tag{3}
	\]
	Similarly, let \( \mathbf{y} \) be the Perron vector of \( A(G_{\delta}) \), and let \( \rho(G_{\delta}) = \rho' \). By symmetry, \( \mathbf{y} \) takes the same value (say \( y_1, y_2, y_3 \) and \( y_4 \)) on the vertices of \( V(K_\delta) \), \( V((\delta-k)K_1) \), \( V(K_{n-2\delta+k-3}) \) and \( V(K_{3}) \), respectively. Then, by \( A(G_{\delta})\mathbf{y} = \rho'\mathbf{y} \), we have
	\[
	\begin{cases}
		\rho' y_2 = \delta y_1, \\
		\rho' y_3 = \delta y_1 + (n-2\delta+k-4)y_3,\\
		\rho' y_4 = \delta y_1 + 2 y_4.
	\end{cases}
	\]
	Note that \( G_{\delta} \) contains \( K_{n-\delta+k-3}  \) as a proper subgraph. Then \( \rho' > n-\delta+k-4 \). Combining with the above, we have
	\[
	y_2 = \frac{\delta y_1}{\rho'},~ y_3 = \frac{\delta y_1}{\rho' - (n-2\delta+k-4)}, ~y_4 = \frac{\delta y_1}{\rho' - 2}. \tag{4}
	\]
	Note that \( n \ge 2s-k+6 \) and $\delta\geq k + 1$. Then \( k + 2 \leq \delta+1 \le s \le (n+k-6)/2 \). As \( G'' \) and \( G_{\delta} \) are not complete graphs, we have \( \rho < n-1 \) and \( \rho' < n-1 \). Now we shall prove that \( \rho < \rho' \). Indeed, if \( \rho \ge \rho' \), then from (3) and (4), we obtain	
	\begin{align*}
		& \quad y^T(\rho' - \rho)x \\
		&= y^T(A(G_{\delta}) - A(G''))x \\
		&= \sum_{i=\delta-k+1}^{s-k} \sum_{j=1}^{n-2s+k-3} (x_{u_i}y_{w_j} + x_{w_j}y_{u_i})
		+ \sum_{i=\delta-k+1}^{s-k-1} \sum_{j=i+1}^{s-k} (x_{u_i}y_{u_j} + x_{u_j}y_{u_i}) \\
		& \quad - \sum_{i=\delta+1}^{s} \sum_{j=1}^{\delta-k} (x_{v_i}y_{u_j} + x_{u_j}y_{v_i}) - \sum_{i=\delta+1}^{s} \sum_{j=1}^{3} (x_{v_i}y_{z_j} + x_{z_j}y_{v_i}) \\
		&= (s-\delta)\left[(n-2s+k-3)(x_2y_3 + x_3y_3) + (s-\delta-1)x_2y_3 - (\delta-k)(x_1y_2 + x_2y_3)
		\right. \\
		& \quad \left.
		- 3(x_1y_4 + x_4y_3)
		\right] \\
		&= (s-\delta)\left[(n-2\delta-s+2k-4)x_2y_3 + (n-2s+k-3)x_3y_3 - (\delta-k)x_1y_2 - 3x_1y_4-3x_4y_3\right] \\
		&> (s-\delta)\left[(2n-2\delta-3s+3k-7)x_2y_3 - (\delta-k)x_1y_2-3x_1y_4-3x_4y_3\right] \quad (\text{since } x_3 > x_2) \\
		&= (s-\delta)x_1y_3\left[
		\frac{s}{\rho}(2n-2\delta-3s+3k-7)
		- (\delta-k)\frac{\rho'-(n-2\delta+k-4)}{\rho'}
		\right. \\
		& \quad \left.
		- \frac{3(\rho'-(n-2\delta+k-4))}{\rho'-2}
		- \frac{3s}{\rho-2}
		\right] \quad (\text{by (3) and (4)}) \\
		&> \frac{(s-\delta)x_1y_3}{\rho(\rho'-2)} \left[ (2n-2\delta-3s+3k-7)s(\rho'-2) - \rho(\delta-k+3)(\rho'-(n-2\delta+k-4)) - 3\rho s\right] \\
		& \quad (\text{since } \rho\geq \rho' ~and \ \rho' > \rho'-2) \\
		&= \frac{(s-\delta)x_1y_3}{\rho(\rho'-2)}
		\left[
		(2n-2\delta-3s+3k-7)s(\rho'-2)
		- \rho(\delta-k+2)(\rho'-(n-2\delta+k-4))
		\right. \\
		& \quad \left.
		- 3\rho s
		- \rho(\rho'-(n-2\delta+k-4))
		\right]\\
		&> \frac{(s-\delta)x_1y_3}{\rho(\rho'-2)}
		\left[s(
		(2n-2\delta-3s+3k-7)(\rho'-2)
		- \rho(\rho'-(n-2\delta+k-4))- 3\rho)
		\right. \\
		& \quad \left.
		- \rho(n-1-(n-2\delta+k-4))
		\right] \quad (\text{since } s\geq\delta+1\geq k+2 ~and \ \rho' < n-1)\\
		&= \frac{(s-\delta)x_1y_3}{\rho(\rho'-2)}
		\left[s(
		(2n-2\delta-3s+3k-7)(\rho'-2)
		- \rho(\rho'-(n-2\delta+k-7))
		\right. \\
		&\quad \left.
		- (2\delta - k + 3)\rho)
		\right] \\
		&\geq\frac{s(s-\delta)x_1y_3}{\rho(\rho'-2)}
		\left[
		(2n-2\delta-3s+3k-7)(\rho'-2)
		- \rho(\rho'-(n-2\delta+k-9))\right]\\
		&\quad (\text{since } s\geq\delta+1\geq k+2 )\\
		&=\frac{s(s-\delta)x_1y_3}{\rho(\rho'-2)}
		\left[
		(2n-2\delta-3s+3k-7)\rho'
		- \rho\rho'+\rho(n-2\delta+k-9)
		\right. \\
		&\quad \left.
		- 2(2n-2\delta-3s+3k-7)\right]\\
		&\geq\frac{s(s-\delta)x_1y_3}{\rho(\rho'-2)}
		\left[\rho'
		(3n-4\delta-3s+4k-16-\rho)- 2(2n-2\delta-3s+3k-7)
		\right]
		\quad (\text{since }\rho\geq\rho' )\\
		&>\frac{s(s-\delta)x_1y_3}{\rho(\rho'-2)}
		\left[\rho'
		(3n-4\delta-\frac{3(n+k-6)}{2}+4k-16-(n-1))- 2(2n-2\delta-3s+3k-7)
		\right]\\
		&\quad (\text{since }\rho<n-1~ and ~s\leq\dfrac{n+k-6}{2} )\\
		&=\frac{s(s-\delta)x_1y_3}{\rho(\rho'-2)}
		\left[
		(\dfrac{n}{2}-4\delta+\dfrac{5}{2}k-6)\rho'- 2(2n-2\delta-3s+3k-7)
		\right]\\
		&>\frac{s(s-\delta)x_1y_3}{\rho(\rho'-2)}
		\left[4(n-\delta + k-4)- 2(2n-2\delta-3(\delta+1)+3k-7)
		\right]\\
		&\quad (\text{since }n\geq8\delta-5k+20~ and ~\rho'>n-\delta+k-4 )\\
		&=\frac{s(s-\delta)x_1y_3}{\rho(\rho'-2)}
		(6\delta-2k+4) \\
		&> 0 \quad (\text{since }s\geq\delta+1\geq k+2),
	\end{align*}
	a contradiction. Therefore, we have \( \rho' > \rho \), and it follows from (1) and (2) that
	\[
	\rho(G) \le \rho(G') \le \rho(G'') < \rho(K_\delta \vee (K_{n-2\delta+k-3} \cup K_{3} \cup (\delta-k)K_1)).
	\]

	\textbf{Case 2.} $s=\delta$.
	
	By Lemma 2.4, we have
	\[
	\rho(G') \le \rho(K_\delta \vee (K_{n-2\delta+k-3} \cup K_{3} \cup (\delta-k)K_1)),
	\]
	with equality holding if and only if \( G' \cong K_\delta \vee (K_{n-2\delta+k-3} \cup K_{3} \cup (\delta-k)K_1) \). Combining this with (1), we conclude that
	\[
	\rho(G) \le \rho(K_\delta \vee (K_{n-2\delta+k-3} \cup K_{3} \cup (\delta-k)K_1)),
	\]
	where the equality holds if and only if \( G \cong K_\delta \vee (K_{n-2\delta+k-3} \cup K_{3} \cup (\delta-k)K_1) \). By Lemma 2.8, we have \( K_\delta \vee (K_{n-2\delta+k-3} \cup K_{3} \cup (\delta-k)K_1) \) is not \( k \)-factor-critical.  Thus the result follows. \\

	\textbf{Case 3.} $k+1\leq s\leq\delta-1$.
	
	Let \( G^* = K_s \vee (K_{n-s-(s-k)(\delta+1-s)-3} \cup K_{3} \cup (s-k)K_{\delta+1-s}) \). Recall that \( G \) is a spanning subgraph of \( G' = K_s \vee (K_{n_1} \cup K_{n_2} \cup \cdots \cup K_{n_{s-k+2}}) \), where \( n_i \ge 3 \), \( n_j \ge 1 \) for \( 1 \le i \le 2 \), \( 3 \le j \le s - k + 2 \), \( n_1 \ge n_2 \ge \cdots \ge n_{s-k+2} \) and \( \sum_{i=1}^{s-k+2} n_i = n - s \). Clearly, \( n_{s-k+2} \ge \delta + 1 - s \) because the minimum degree of \( G' \) is at least \( \delta \). By Lemma 2.4, we have
	\[
	\rho(G') \le \rho(G^*), \tag{5}
	\]
	where the equality holds if and only if \( (n_1, n_2, n_3, \dots, n_s-k+2) = (n - s - (s - k)(\delta + 1 - s)-3, 3, \delta + 1 - s, \dots, \delta + 1 - s) \).
	
	If \( s = k+1 \), then \( G^* = K_{k+1} \vee (K_{n-\delta-4} \cup K_{3} \cup K_{\delta-k}) \). Combining Lemma 2.5, (1) and (5), we get
	\[
	\rho(G) \le \rho(G') \le \rho(G^*) < \rho(K_\delta \vee (K_{n-2\delta+k-3} \cup K_{3} \cup (\delta-k)K_1)).
	\]
	Thus, we consider \( s \ge k+2 \) in the following. Assume that \( \rho(G^*) = \rho^* \ge n - (s - k)(\delta + 1 - s)-3 \). Let \( x \) be the Perron vector of \( A(G^*) \). By symmetry, \( x \) takes the same values \( x_1, x_2, x_3 \), and \( x_4 \) on the vertices of \( V(K_s) \), \( V(K_{n-s-(s-k)(\delta+1-s)-3}) \), \( V(K_{3}) \) and \( V(K_{\delta+1-s}) \), respectively. Then from \( A(G^*)x = \rho^*x \) we get
	\[
	\begin{cases}
		\rho^* x_1= (s-1)x_1 + (n-s-(s-k)(\delta+1-s)-3)x_2 + 3x_3 + (s-k)(\delta+1-s)x_4,  \\
		\rho^* x_2= s x_1 + (n-s-(s-k)(\delta+1-s)-4)x_2,  \\
		\rho^* x_3= s x_1 + 2x_3,\\
		\rho^* x_4= s x_1 + (\delta - s)x_4.
	\end{cases}
	\]
	Combining with the above, we yield that
	\[
	\begin{cases}
		x_2 = \dfrac{s x_1}{\rho^* - (n-s-(s-k)(\delta+1-s)-4)}, \\[1em]
		x_3 = \dfrac{s x_1}{\rho^* - 2},\\[1em]
		x_4 = \dfrac{s x_1}{\rho^* - \delta+s}.
	\end{cases} \tag{6}
	\]
	Note that \( n \geq  \delta(\delta - k-1)^2 -2\delta - 1 \), \( s \ge k + 2 \) and \( \delta \ge s + 1 \). Then \( \rho^* \ge n-(s-k)(\delta+1-s)-3 > \delta + 1 \). Combining this with (6), we have
	\begin{align*}
		\rho^* + 1
		&= s + (n-s-(s-k)(\delta+1-s)-3)\frac{x_2}{x_1} + \frac{3x_3}{x_1} + (s-k)(\delta+1-s)\frac{x_4}{x_1}\\
		&= s
		+ \frac{s(n - s - (s-k)(\delta+1-s)-3)}{\rho^* - (n - s  -(s-k)(\delta+1-s)-4)}+ \frac{3s}{\rho^* - 2}+\frac{s(\delta+1-s)(s-k)}{\rho^* - \delta + s} \\
		&< s + \frac{s(n-s)}{s+1} \quad (\text{since } \rho^* \ge n-(s-k)(\delta+1-s)-3 > \delta + 1) \\
		&= n - (s-k)(\delta+1-s)-3 - \frac{n - s - ((s-k)(\delta+1-s)-3)(s+1)}{s+1} \\
		&\le n -(s-k)(\delta+1-s)-3 - \frac{\delta((\delta-k-1)^2 - 3) -((s-k)(\delta+1-s)-3)(s+1) }{s+1} \\
		& \quad - \frac{\delta-s-1}{s+1}
		\quad (\text{since } n \ge \delta(\delta - k-1)^2 -2\delta - 1) \\
		&\le n -(s-k)(\delta+1-s)-3 \quad (\text{since } \delta \ge s+1 \text{ and } s \ge k+2) \\
		&\le \rho^*,
	\end{align*}
	which is impossible. Therefore,
	\begin{align*}
		\rho^* &< n - (s-k)(\delta+1-s)-3 \\
		&= n - \delta + k - 4 - ((s-k-1)(\delta-s)-1) \\
		&\le n - \delta + k - 4 \quad (\text{since } \delta \ge s+1 \text{ and } s \ge k+2).
	\end{align*}
	Since \( K_\delta \vee (K_{n-2\delta+k-3} \cup K_{3} \cup (\delta-k)K_1) \) contains \( K_{n-\delta+k-3} \) as a proper subgraph, we have \( \rho(K_\delta \vee (K_{n-2\delta+k-3} \cup K_{3} \cup (\delta-k)K_1)) > n - \delta + k - 4 \). Combining this with (1) and (5), we may conclude that
	\[
	\rho(G) \le \rho(G') \le \rho(G^*) < \rho(K_\delta \vee (K_{n-2\delta+k-3} \cup K_{3} \cup (\delta-k)K_1)).
	\]
	This completes the proof.
	\hfill\(\square\)\\

	\textbf{Declaration of competing interest}
	
	The authors declare that they have no known competing financial interests or personal relationships that could have appeared to influence the work reported in this paper.

	\textbf{Data availability}
	
	No data was used for the research described in the article.

\end{document}